\documentclass[12pt,reqno]{amsart}
\usepackage{amsfonts, amssymb, amsmath}
\usepackage{amsthm}                               % AMS symbols
\usepackage{setspace}                                                         % Spacing
\usepackage{geometry}                                                         % Spacing
\usepackage{color}                                                            % Text coloring
\usepackage{graphicx}                                                         % Graphics
\usepackage{slashed}                                                          % Slashed notations
\usepackage{hyperref}
\usepackage{float}
\usepackage{verbatim}
\usepackage{upgreek}
\usepackage{mathtools}
\usepackage[capitalise]{cleveref}
\usepackage{mathrsfs}
\newtheorem{theorem}{Theorem} 	      	      	                              % Theorem environment
\newtheorem{lemma}[theorem]{Lemma}     	       	      	      	      	      % Lemma environment
\newtheorem{proposition}[theorem]{Proposition} 	      	      	      	      % Proposition environment
\newtheorem{definition}[theorem]{Definition} 	      	      	                % Definition environment
\newtheorem{remark}[theorem]{Remark}                                          % Remark environment
\newtheorem{assumption}[theorem]{Assumption}    	      	                    % Assumption environment
\newcommand{\thistheoremname}{}
\newtheorem*{theorem*}{Theorem}
\newtheorem*{genericthm*}{\thistheoremname}
\newenvironment{namedthm*}[1]
  {\renewcommand{\thistheoremname}{#1}%
   \begin{genericthm*}}
  {\end{genericthm*}}

\numberwithin{equation}{section}                                              % Equation numbering
\numberwithin{theorem}{section}                                               % Theorem numbering
\numberwithin{figure}{section}                                                % Figure numbering

\newcommand{\mc}[1]{\mathcal{#1}}                                             % Mathcal
\newcommand\numberthis{\addtocounter{equation}{1}\tag{\theequation}}

\usepackage{marginnote}

\newcommand{\R}{\mathbb{R}}                                                   % Real numbers
\newcommand{\rd}{{\rm d}}

\newcommand{\nt}{\numberthis}

\newcommand{\p}{\partial}
\newcommand{\nb}{\nabla}

\allowdisplaybreaks
\title[Observability for critically singular potentials]{Observability for Wave Equations with Critically Singular Potentials}

\author{Vaibhav Kumar Jena}
\address{Department of Mathematics\\ 
Indian Institute of Science \\ Bengaluru 560012, Karnataka, India.}
\email{vaibhavkuma1@iisc.ac.in}

\author{Arick Shao}
\address{School of Mathematical Sciences\\
Queen Mary University of London\\
London E1 4NS\\ United Kingdom.}
\email{a.shao@qmul.ac.uk}

\begin{document}

\begin{abstract}
In this article, we prove boundary observability estimates for wave equations on bounded domains $\Omega \subseteq \mathbb{R}^n$, with a critically singular potential that diverges as the inverse square distance to $\partial \Omega$.
The result is applicable in all dimensions, and the key geometric assumption is a convexity condition on $\Omega$.
The main tool for our result is a global Carleman estimate that is carefully adapted to the critical singular nature of the potential. 
\end{abstract}

\maketitle

\section{Introduction}

In this paper, we consider the following setting:

\begin{assumption}[General setting] \label{ass.setting}
Let $n \geqslant 1$, and let $\Omega \subseteq \R^n$ be a bounded open subset with $C^4$-boundary $\Gamma := \partial \Omega$.
In addition, we let $d_\Gamma: \Omega \rightarrow \mathbb{R}$ denote the distance to $\Gamma$.

We also fix $T > 0$, and we consider the following wave equation on $( -T, T ) \times \Omega$,
\begin{equation} \label{eq_intro_wave}
- \p_t^2 u + \Delta_x u + \frac{\sigma}{d_\Gamma^2} u + X_t \partial_t u + X_x \cdot \nabla_x u + V u = 0 \text{,}
\end{equation}
where $\sigma \in ( -\frac{3}{4}, 0 )$, $V \in d_\Gamma^{-1} L^\infty (( -T, T ) \times \Omega; \R )$, and $X \in L^\infty ( ( -T, T ) \times \Omega; \R_t \times \R_x^n )$.
\end{assumption}

The key feature of our equation \eqref{eq_intro_wave} is the singular inverse-square potential $\sigma d_\Gamma^{ -2 }$.
Since this potential has the same scaling as $\Delta_x$, it causes fundamental difficulties in the analysis of \eqref{eq_intro_wave}.
In particular, the parameter $\sigma$, representing the strength of this singular potential, drastically alters the asymptotics of solutions $u$ at $\Gamma$.\footnote{We will justify the specific range $( -\frac{3}{4}, 0 )$ later in this section.}
On the other hand, $X$ and $V$ represent general (time-dependent) lower-order coefficients that are less singular, and the drift $X$ is allowed to contain both spatial and time components.

Our main objective is to establish a (Dirichlet) boundary observability inequality for \eqref{eq_intro_wave},
\[
\text{Energy}[u] |_{ t = 0 } \lesssim \int_{ ( -T, T ) \times \Gamma } | \text{Neumann data} [u] |^2 \text{,}
\]
which linearly estimates the energy of $u$ by its (Neumann) boundary data over a sufficiently large timespan $2 T$; see Theorem \ref{thm_main} below.
Most notably, our estimate will hold for any $n$ and hence is applicable to higher-dimensional settings.
The main ingredient for proving our observability result is a novel global Carleman estimate that is adapted to the singular potential $\sigma d_\Gamma^{-2}$; see Theorem \ref{thm_carl_est_main}.
The key assumption required for our Carleman estimate, and by extension our observability inequality, is a convexity condition on our domain $\Omega$; in particular, this condition implies $\Gamma$ is the level set of a convex function.

\subsection{Background}

Observability for wave equations is, by this point, a classical topic with decades of literature.
Thus, to maintain a concise introduction, we will restrict most of our attention near wave equations with critically singular potentials and point the reader to references within for more comprehensive surveys of observability.

In general, observability estimates have a wide variety of well-known applications.
In the present context, these lead to (linear) quantitative unique continuation, in particular implying the solution is uniquely determined by its boundary data in the observation region.
Observability also plays a fundamental role in the control theory of partial differential equations (PDEs), in that observability and controllability are known to be equivalent notions via the duality principle of \cite{dolec_russe:obs_control, MR963060}.

Many techniques have been applied to prove observability for wave equations, with some of the most successful being Fourier/spectral methods \cite{MR1366650, MR3948235, komornik_loreti:fourier_control}, microlocal methods \cite{MR1178650, MR1483711, MR3570496, MR3649373}, and multiplier/Carleman estimates \cite{MR3046295, ho:obs_wave, MR2361985, MR4797140, MR1804797, MR3964826, MR1290492, MR1786331}.
In this paper, our focus will be on Carleman estimate methods.

Due to their physical space nature, observability inequalities obtained from Carleman estimates tend to be suboptimal in terms of the size of the control region (compared to, e.g.\ the geometric control condition of the microlocal techniques).
Their primary advantage, however, is that they are applicable to an especially wide class of wave operators, in particular those containing general lower-order coefficients that can depend non-analytically on time.\footnote{In contrast, microlocal and spectral methods tend to require time-independent (or time-analytic) operators.}
This will be a key strength of our main results as well.

It is also worth mentioning that Carleman estimates have played a fundamental role in the unique continuation theory of PDEs over the past century \cite{carl:uc_strong, MR781537, tat:uc_hh}.
Moreover, just in the context of wave equations, they have found applications to a surprisingly diverse collection of problems---including control, observation, and stabilisation of PDEs; inverse problems \cite{MR630135, MR1855284}; singularity formation \cite{alex_shao:uc_nlwf}; and rigidity results in relativity \cite{alex_io_kl:unique_bh, alex_io_kl:rigid_bh, alex_schl:time_periodic, hol_shao:aads, io_kl:unique_bh}.

We now turn our attention to the singular potential $\sigma d_\Gamma^{-2}$ in \eqref{eq_intro_wave}.
As in \cite{MR4275478, MR5023025}, we refer to this as ``critical", since it possesses the same scaling as the Laplacian $\Delta_x$ and hence must be viewed and treated as a principal part of the wave operator (as opposed to the coefficients $X$ and $V d_\Gamma^{-1}$, which can be treated perturbatively).
On one hand, such singular potentials can be viewed as infinite potential wells.
Moreover, geometric extensions of \eqref{eq_intro_wave}---in which the flat background is replaced by a curved one---naturally arise from both the Klein-Gordon and Einstein equations on asymptotically anti-de Sitter settings, which are fundamental to the study of holography in theoretical physics \cite{breit_freedm:stability_sgrav, hol_shao:aads}.

The key difficulties of this article arise from this singular potential, along with the need to combine its analysis with techniques for standard wave equations.
As mentioned before, the potential completely changes the asymptotic behaviour of solutions near $\Gamma$---see the following section for details.
Moreover, since we are interested here in observability from the boundary, where the potential becomes singular, many of these issues become unavoidable here.

A vast majority of results relating to observability (and controllability) for \eqref{eq_intro_wave} apply only to $1$ spatial dimension ($n = 1$), moreover only in the simpler case of a potential that is singular on only one end of an interval:
\begin{equation}
\label{eq_sing_wave_easy} - \partial_t^2 u + \partial_x^2 u + \frac{ \sigma }{ x^2 } u = 0 \text{,} \qquad \Omega := ( 0, 1 ) \text{.}
\end{equation}
In fact, by appropriately transforming $x$ and $u$, see the computations in \cite[Appendix A]{MR3924864}, \eqref{eq_sing_wave_easy} can be transformed into a \emph{degenerate} wave equation on the same interval:
\begin{equation}
\label{eq_degen_wave_easy} - \partial_t^2 v + \partial_x ( x^\alpha \partial_x v ) = 0 \text{,} \qquad \alpha \in ( 0, 2 ) \text{.}
\end{equation}

Questions of controllability and observability for \eqref{eq_degen_wave_easy} have received considerable attention in recent years; see, e.g.\ \cite{bai_chai:degen_wave, bai_chai:degen_wave_int, bfm:degen_wave, fms:degen_wave, MR3227458, mlg:degen_wave, zhang_chai:degen_wave}.
A majority of results address observability from the nonsingular part of the boundary, so that some of the key difficulties can be circumvented.
(However, we highlight the results of \cite{MR3227458, mlg:degen_wave}, which address singular boundary observability.)
Moreover, many of these works use multiplier or Fourier methods, which are highly sensitive to the exact form of the equation, hence they do not apply to potentials that are singular at both $x = 0$ and $x = 1$, or if general lower-order terms are present in the equation.
We also mention the recent work \cite{dehoop2026boundaryobservabilitygasgiant} on some higher-dimensional analogues of \eqref{eq_degen_wave_easy}.

In light of the above, the main motivations for our boundary observability estimates are:
\begin{enumerate}
\item To obtain results for a general class of domains $\Omega$ in all dimensions.

\item To treat wave equations with general lower-order coefficients---namely, $V$ and $X$ in \eqref{eq_intro_wave}---that depend (non-analytically) on time.

\item To treat potentials $\sigma d_\Gamma^{-2}$ that become critically singular at all of $\Gamma$.
\end{enumerate}
While Carleman estimate methods would be naturally suited to tackling the aforementioned goals, developing Carleman estimates adapted to \eqref{eq_intro_wave} is known to be a very difficult problem, and very few such results exist in the literature.

Carleman estimates for (PDEs conformally equivalent to) wave equations with critically singular potentials were obtained in \cite{chatz_shao:uc_ads_gauge, hol_shao:uc_ads, hol_shao:uc_ads_ns, mcgill_shao:psc_aads}, in the context of unique continuation from the boundary of asymptotically anti-de Sitter spacetimes.
While these estimates take into account the altered boundary asymptotics of solutions in a refined way, they also only hold near the boundary, making them ill-suited for proving observability (which requires \emph{global} Carleman estimates).
Regarding the observability of \eqref{eq_intro_wave} itself, \cite{MR4275478} established the first global Carleman estimate and boundary observability for \eqref{eq_intro_wave} in higher dimensions, however the results only hold when the domain $\Omega = B (0, 1)$ is a ball.\footnote{Also, for technical reasons, the result holds for all $n \geq 1$ except $n = 2$.}
%These restrictions come up due to the use of Morawetz estimates (\cite{MR234136}), which only applies to spherically symmetric domains and for $n \neq 2$.

For more general $\Omega$, key ideas arose from \cite{MR5023025}, which proved global Carleman estimates---and hence observability---for the analogous singular \emph{heat equation},
\begin{equation} \label{eq_intro_heat}
- \p_t u + \Delta_x u + \frac{\sigma}{d_\Gamma^2} u + X \cdot \nabla u + V u = 0 \text{,}
\end{equation}
now in all dimensions, but under the additional assumption that $\Gamma$ is a convex hypersurface.\footnote{This convexity assumption was removed in \cite{MR4781095}, but at a steep cost---the ensuing Carleman estimate only held locally near $\Gamma$, leading to just unique continuation rather than observability.}
In particular, \cite{MR5023025} constructed a global Carleman weight $y$ that is carefully adapted to both the distance $d_\Gamma$ and to the altered boundary asymptotics.\footnote{Earlier results for the parabolic analogue of \eqref{eq_sing_wave_easy} were treated by \cite{MR3924864} using Fourier methods; see also the recent \cite{MR5033516}. Earlier global Carleman estimates for \eqref{eq_intro_heat} in higher dimensions were developed in \cite{MR3507988}, but this applied only to internal observability and could avoid many difficulties involving boundary asymptotics.}

The main results of this paper can be viewed as analogues of those in \cite{MR5023025} for the wave equation \eqref{eq_intro_wave}.
The Carleman weight we construct is inspired by that of \cite{MR5023025}, however here we must simultaneously address issues arising from pseudoconvexity (which are crucial for wave equation Carleman estimates) that are entirely absent in parabolic settings.
Our results also solve the problem proposed in \cite[Remark 1.7]{MR4275478}---that is, we extend the results of \cite{MR4275478} to convex domains and also address the missing case $n = 2$.

\subsection{Observability}

Before stating our main observability result, we first define the appropriate quantities for analyzing \eqref{eq_intro_wave}, as well as the precise assumptions needed for $\Omega$.

Throughout the paper, we will use the following conventions for derivatives:
\begin{itemize}
\item Let $\nabla := \nabla_x$ denote the spatial gradient on $\R^n_x$.

\item Let $\Delta := \Delta_x$ denote the Laplacian on $\R^n_x$.

\item Let $\bar{\nabla} := ( \partial_t, \nabla_x )$ denote the spacetime gradient on $\R_t \times \R^n_x$.

\item Let $\square := -\p_t^2 + \Delta_x$ denote the wave operator on $\R_t \times \R^n_x$.
\end{itemize}
Moreover, for convenience, we will adopt the following notations relating to \eqref{eq_intro_wave}:

\begin{definition} % \label{def_conventions}
Let $\square_\sigma$ denote the critically singular wave operator
\begin{equation*} %\label{wave_sigma}
\square_\sigma := \square + \frac{\sigma}{d_\Gamma^2} \text{.}
\end{equation*}
Moreover, let $q := q_\sigma \in ( -1, 0 )$ denote the (unique) parameter satisfying\footnote{$q$ corresponds to the parameter $2 \kappa$ from \cite{MR4275478, MR5023025, MR4781095}.}
\begin{equation*} %\label{q_sigma}
\frac{ q ( 2 - q ) }{4} := \sigma \text{.}
\end{equation*}
\end{definition}

\subsubsection{The Convexity Condition}

For our main results, the key assumption that is required for $\Omega$ is a \emph{global convexity condition}.
More precisely, this condition is characterised by the existence of \emph{a foliation for $\Omega$ whose level sets are convex and extend from $\Gamma$}:

\begin{definition} \label{def_bdf}
We say that $y \in C^4(\Omega)$ is a \emph{convex boundary defining function} (abbreviated \emph{CBDF}) for $\Omega$ if and only if the following holds:
\begin{enumerate}
\item $y > 0$ on $\Omega$.

\item There exists $0 < d_0 \ll 1$ such that $y = d_\Gamma$ on $\{ x \in \Omega \mid d_\Gamma (x) < d_0 \}$.

\item $y$ has a unique critical point $x_* \in \Omega$, with $d_\Gamma (x_*) > d_0$.

\item There exists $\gamma > 0$ such that $y$ satisfies
\begin{equation} \label{eq_angconcave}
-\xi \cdot \nabla^2 y (x) \cdot \xi \geqslant \gamma |\xi|^2
\end{equation}
for every $x \in \Omega$ and each vector $\xi \in \R^n$ tangent to the level set of $y$ at $x$.
\end{enumerate}
\end{definition}

To make sense of Definition \ref{def_bdf}, we note that (1) and (2) ensure that $y$ behaves like the boundary distance $d_\Gamma$ near $\Gamma$, and that $\Gamma$ corresponds to the level set $y = 0$.
The level sets of $y$ foliate $\Omega$ into hypersurfaces, and (4) implies that these level set are uniformly convex.
(In particular, this implies $\Gamma = \{ y = 0 \}$ is convex.)
Finally, (3) ensures this convex $y$-foliation degenerates only at a single point $x_\ast$ that is far from $\Gamma$.

One important role of a CBDF $y$ is technical in nature.
Recall that $d_\Gamma$ itself is an undesirable quantity for analysis, since it can fail to be differentiable away from $\Gamma$.
Thus, $y$ serves as an everywhere ($C^4$-)smooth substitute for $d_\Gamma$ that remains viable away from $\Gamma$.

Furthermore, $y$ will play a crucial role in our global Carleman estimate as part of the Carleman weight function.
Here, the convexity condition \eqref{eq_angconcave} is required for ensuring the weight function is pseudoconvex.
The assumption of a convex ``distance" is standard in Carleman estimate literature; see, for instance, \cite{MR2383077, MR2361985, MR1944764, MR1710233}.
However, in contrast to the above works, we only require in \eqref{eq_angconcave} convexity in directions tangent to the level sets of $y$.

Thus, the crucial assumption for $\Omega$ in our main results can be summarised as follows:

\begin{assumption}[Convexity] \label{assump_h0}
There exists a convex boundary defining function $y$ for $\Omega$.    
\end{assumption}

\begin{remark}
As a simple example, the unit ball $\Omega := B ( 0, 1 )$---the setting of the observability results of \cite{MR4275478}---satisfies Assumption \ref{assump_h0}; see Proposition \ref{h0_ball}.
\end{remark}

\begin{remark}
While boundary defining functions $y$ were also used for the singular heat equation \eqref{eq_intro_heat} in \cite{MR5023025}, the convexity requirements there were less stringent, and $y$ can be constructed provided $\Gamma$ is convex.
Here, on the other hand, we require a global foliation of convex hypersurfaces on $\Omega$, and we view this as a geometric assumption for $\Omega$.
\end{remark}

\subsubsection{Boundary Asymptotics}

Well-posedness results for (geometric extensions of) \eqref{eq_intro_wave} can be found in \cite{MR3089665}, e.g.\ in energy spaces and with Dirichlet boundary conditions.
Most importantly for our purposes, \cite{MR3089665} identified the relevant notions of energy and boundary data.

First, for the energy, one can simply take the standard $H^1 \times L^2$ norm:
\begin{definition}
The energy for \eqref{eq_intro_wave} at time $\tau \in ( -T, T )$ is given by
\begin{equation} \label{eq_energy_defn}
E[u] (\tau) := \int_\Omega \left[ | \partial_t u ( \tau, x ) |^2 + | \nabla_x u ( \tau, x ) |^2 + | u ( \tau, x ) |^2 \right] \rd x \text{.}
\end{equation}
\end{definition}

In particular, the well-posedness theory implies that any solution of \eqref{eq_intro_wave} with finite energy initial data will continue to have finite energy for all times.

\begin{remark}
In fact, \cite{MR4275478, MR3089665} defined instead a ``twisted" energy, roughly with ``$\nabla_x u$" replaced by ``$y^\frac{q}{2} \nabla ( y^{ - \frac{q}{2} } u )$".
However, both the standard and twisted energies can be shown to be equivalent using the Hardy inequality of Proposition \ref{lemma_hardy_para}; see the computations in \cite{MR5023025, MR4781095}.
\end{remark}

Next, the correct notions of Dirichlet and Neumann boundary data for \eqref{eq_intro_wave}, both for well-posedness and for this article, are summarised in the following:

\begin{definition} %\label{def_bdry_DN}
For a function $u: ( -T, T ) \times \bar{\Omega} \rightarrow \R$, we define the Dirichlet and Neumann traces with respect to $\Box_\sigma$ on $\Gamma$ (assuming they exist) by
\begin{align*} %\label{eq_bdry_DN}
\mc{D}_q u : ( -T, T ) \times \Gamma \rightarrow \R \text{,} &\qquad \mc{D}_q u := \lim_{ d_\Gamma \searrow 0 } d_\Gamma^{-\frac{q}{2}} u \text{,} \\
\mc{N}_q u : ( -T, T ) \times \Gamma \rightarrow \R \text{,} &\qquad \mc{N}_q u := - \lim_{ d_\Gamma \searrow 0 } d_\Gamma^q ( \nabla d_\Gamma \cdot \nabla ) ( d_\Gamma^{-\frac{q}{2}} u ) \text{.}
\end{align*}
\end{definition}

In particular, unlike for classical wave equations, both the Dirichlet and Neumann data for solutions $u$ of \eqref{eq_intro_wave} behave like specific powers of $d_\Gamma$ (depending on $\sigma$) near $\Gamma$, and one must take this into account in order to properly extract information about $u$ on the boundary.

For our analysis, we will also need to derive some more refined boundary behaviours for Dirichlet solutions to \eqref{eq_intro_wave}.
These properties are summarised in the following definition:

\begin{definition} \label{def_bdry_ads}
A function $u: ( -T, T ) \times \bar{\Omega} \rightarrow \R$ is called \emph{boundary admissible} with respect to $\Box_\sigma$ if and only if the following conditions hold:
\begin{enumerate}
\item $\mc{N}_q u$ exists and is finite.

\item We have the following Dirichlet limits:
\begin{equation*} %\label{eq_bdry_ads} 
(1-q) \mc{D}_q ( d_\Gamma^{-1+q} u ) = - \mc{N}_q u \text{,} \qquad \mc{D}_q ( d_\Gamma^q \p_t u ) = 0 \text{.}
\end{equation*}
\end{enumerate}
\end{definition}

The conditions of Definition \ref{def_bdry_ads} capture the behaviour of ``sufficiently regular" solutions of \eqref{eq_intro_wave} with homogeneous Dirichlet data.
To justify this assertion in a relatively gentle setting, \cite{MR4275478} showed that classical Dirichlet solutions of \eqref{eq_intro_wave} with finite (appropriately defined) $H^2$-energy satisfy all the conditions in Definition \ref{def_bdry_ads}:

\begin{proposition} \label{prop_regular}
Let $u \in C^2 ((-T,T) \times \Omega)$ be a solution of \eqref{eq_intro_wave}, and assume:
\begin{itemize}
\item $\mc{D}_q u = 0$.

\item $u$ has finite (twisted) $H^2$-energy for all $\tau \in ( -T, T )$:\footnote{Again, using the Hardy inequality of Proposition \ref{lemma_hardy_para}, one can show \eqref{E2_energy} is equivalent to the $E_2$-energy defined in \cite[Section 2.4]{MR4275478}.}
\begin{equation} \label{E2_energy}
\int_\Omega | y^{ -\frac{q}{2} } \nabla [ y^q \nabla ( y^{ -\frac{q}{2} } u ) ] ( \tau, x ) |^2 dx + E [ \partial_t u ] ( \tau ) + E [u] ( \tau ) < \infty \text{.}
\end{equation}
\end{itemize}
Then $u$ is boundary admissible with respect to $\Box_\sigma$.
\end{proposition}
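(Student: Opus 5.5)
Near $\Gamma$ we have $y = d_\Gamma$, and the statement only concerns limits as $d_\Gamma \searrow 0$, so everything happens in a collar $\{ d_\Gamma < d_0 \}$. There I would use boundary normal coordinates $( r, \omega ) \in ( 0, d_0 ) \times \Gamma$, with $r = d_\Gamma$. Since $\Gamma$ is $C^4$, these coordinates are $C^3$, and
\[
\Delta = \partial_r^2 + H ( r, \omega ) \partial_r + \Delta_{ \Gamma_r } ,
\]
with $H$ bounded. Set $v := r^{ -\frac{q}{2} } u$. The choice $\sigma = \frac{ q ( 2 - q ) }{4}$ is exactly what makes the singular term cancel, giving
\[
r^{ \frac{q}{2} } \partial_r \bigl( r^q \partial_r v \bigr) \, r^{ -q } + \frac{ \sigma }{ r^2 } u = \partial_r^2 u + \frac{ \sigma }{ r^2 } u .
\]
Concretely, I would rewrite \eqref{eq_intro_wave} as
\[
r^{ -q } \partial_r ( r^q \partial_r v ) = r^{ -\frac{q}{2} } \bigl[ \partial_t^2 u - \Delta_{ \Gamma_r } u - H \partial_r u - X \cdot \bar\nabla u - V u \bigr] =: F .
\]
The finiteness of the twisted $H^2$-energy \eqref{E2_energy}, together with $E [ \partial_t u ]$ and $E [u]$ and the Hardy inequality of Proposition \ref{lemma_hardy_para}, controls $r^{ -\frac{q}{2} } \partial_r ( r^q \partial_r v )$ in $L^2$. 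By the equation, it also controls $F$ in a weighted $L^2$ sense.

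\textbf{Existence of $\mathcal{N}_q u$.} Write $w := r^q \partial_r v$, so that $\mathcal{N}_q u = - \lim_{ r \searrow 0 } w$. We have $\partial_r w = r^q F$, and the goal is to show $r^q F \in L^1_r$ near $0$. By Cauchy--Schwarz, $r^{ \frac{q}{2} } F \in L^2$ (from the twisted $H^2$ bound), and $r^{ \frac{q}{2} } \in L^2 ( 0, d_0 )$ because $q > -1$. Hence $r^q F \in L^1 ( dr )$, and $w$ has a limit as $r \searrow 0$.

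A pointwise-in-$\omega$ statement needs care. I would take traces in $L^2 ( \Gamma )$, or use the $C^2$ regularity away from $\Gamma$ together with a uniform-in-$\omega$ argument; this gives existence in the trace sense, which is what the admissibility conditions require. The same argument applied with $\partial_t u$ in place of $u$ uses $E [ \partial_t u ] < \infty$.

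\textbf{Dirichlet limits.} Since $\mathcal{D}_q u = 0$, we have $v \to 0$ at $r = 0$. Integrating $\partial_r v = r^{ -q } w$ from $0$ gives
\[
v = \int_0^r s^{ -q } w ( s ) \, ds = \frac{ r^{ 1 - q } }{ 1 - q } w ( 0 ) + o ( r^{ 1 - q } ) .
\]
Then
\[
\mathcal{D}_q ( r^{ -1 + q } u ) = \lim r^{ -1 + q } v = \frac{ w ( 0 ) }{ 1 - q } = - \frac{ \mathcal{N}_q u }{ 1 - q } ,
\]
which is the first identity.

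For the second identity, note that $\partial_t v$ also vanishes at $r = 0$, since the Dirichlet condition holds for all $t$. Applying the same representation to $\partial_t u$ gives $\partial_t v = O ( r^{ 1 - q } )$, provided $\lim r^q \partial_r \partial_t v$ exists. Alternatively, a Hardy-type argument using only $E [ \partial_t u ] < \infty$ gives $\partial_t v = o ( r^{ \frac{1}{2} - q } )$, up to averaging in $\omega$. Either way,
\[
r^{ -\frac{q}{2} } ( r^q \partial_t u ) = r^q \partial_t v \to 0 ,
\]
because $q + \frac{1}{2} - q > 0$, which yields $\mathcal{D}_q ( d_\Gamma^q \partial_t u ) = 0$.

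\textbf{Main obstacle.} The hardest step is the first one: showing that $r^q F \in L^1$ uniformly enough in $\omega$ to get a genuine trace. This is where the tangential term $\Delta_{ \Gamma_r } u$ and $\partial_t^2 u$ must be absorbed into the twisted $H^2$ control. I would handle it by exploiting the equation directly: it identifies $F$ with the twisted $H^2$ quantity plus terms of order $r^{ -\frac{q}{2} } ( \partial_t^2 u , \nabla u , r^{ -1 } u )$, and each of these is weighted-$L^2$ by the finite energies and the Hardy inequality.
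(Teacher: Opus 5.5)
The paper gives no argument of its own here; it simply cites \cite[Proposition 2.3]{MR4275478}. Your proposal therefore has to stand on its own.

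\textbf{What works.} The core of your argument is sound. In the collar, $|\nabla r| = 1$ gives $\nabla^2 r \cdot \nabla r = 0$. Hence the normal--normal component of $y^{-\frac{q}{2}} \nabla [ y^q \nabla ( y^{-\frac{q}{2}} u ) ]$ is exactly $r^{-\frac{q}{2}} \partial_r w$, and the twisted $H^2$ energy puts $r^{-\frac{q}{2}} \partial_r w$ in $L^2$ directly. Cauchy--Schwarz with $\int_0^{d_0} r^q \, dr < \infty$ then gives
\[
\| w ( r_1, \cdot ) - w ( r_2, \cdot ) \|_{ L^2 ( \Gamma ) }^2 \lesssim | r_2^{1+q} - r_1^{1+q} | \int_{ \{ r_1 < d_\Gamma < r_2 \} } \bigl| y^{-\frac{q}{2}} \nabla [ y^q \nabla ( y^{-\frac{q}{2}} u ) ] \bigr|^2 \text{.}
\]
So $\lim_{r \searrow 0} w$ exists a.e.\ in $\omega$ and in $L^2(\Gamma)$. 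Integrating $\partial_r v = r^{-q} w$ from $r = 0$ then yields $(1-q) \mc{D}_q ( d_\Gamma^{-1+q} u ) = - \mc{N}_q u$, as you say.

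Your ``main obstacle'' is therefore not one. The equation is never needed, and routing through it would force you to control $\Delta_{\Gamma_r} u$, which $E[u]$, $E[\partial_t u]$ and Hardy do not provide. A minor slip: your displayed conjugation identity has a spurious $\sigma r^{-2} u$ on the left. The correct identity is $r^{-\frac{q}{2}} \partial_r ( r^q \partial_r v ) = \partial_r^2 u + \sigma r^{-2} u$, which is what your definition of $F$ actually uses.

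\textbf{The gap.} It lies in $\mc{D}_q ( d_\Gamma^q \partial_t u ) = 0$, i.e.\ $r^{\frac{q}{2}} \partial_t u \to 0$.

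Your first option, representing $\partial_t v$ as you did $v$, needs $\lim r^q \partial_r \partial_t v$, i.e.\ twisted $H^2$ control of $\partial_t u$. This is not assumed: $E[\partial_t u]$ only gives $\partial_t u \in H^1$ and $\partial_t^2 u \in L^2$. You also cannot obtain it by differentiating the equation in $t$, since $X$ and $V$ are merely $L^\infty$. For the same reason, your remark that ``the same argument applied with $\partial_t u$'' needs only $E[\partial_t u]$ is false.

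Your Hardy-type alternative is the right route, but as written it rests on the wrong input and gives the wrong rate. The fact you invoke, $\partial_t v \to 0$, is vacuous for $q < 0$. Indeed, $\partial_t u (\tau) \in H^1$ already has a finite limit along a.e.\ normal, and $r^{-\frac{q}{2}} \to 0$. What the one-dimensional step actually needs is that the ordinary trace of $\partial_t u$ vanishes.

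That is true, but it must be derived. Your first step gives $u = r^{\frac{q}{2}} v = O ( r^{1 - \frac{q}{2}} )$, so $\mathrm{Tr}\, u (\tau) = 0$ for every $\tau$. The trace then commutes with $\partial_t$ in the finite-energy class, e.g.\ if $u \in C^1 ( ( -T, T ); H^1 (\Omega) )$, or by testing against $\phi (t)$ and integrating by parts in $t$.

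With that in hand, $| \partial_t u | \leq r^{\frac{1}{2}} \| \partial_r \partial_t u \|_{ L^2 ( 0, r ) }$. So $\partial_t v = o ( r^{\frac{1-q}{2}} )$, not $o ( r^{\frac{1}{2} - q} )$, and
\[
\| r^{\frac{q}{2}} \partial_t u ( \tau, r, \cdot ) \|_{ L^2 ( \Gamma ) }^2 \lesssim r^{1+q} \int_{ \{ d_\Gamma < r \} } | \nabla \partial_t u ( \tau ) |^2 \longrightarrow 0 \text{.}
\]
The exponent that must be positive is $\frac{1+q}{2}$, which is where $q > -1$ is used, not $q + \frac{1}{2} - q$.
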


\begin{proof}
See \cite[Proposition 2.3]{MR4275478}.
\end{proof}

\begin{remark}
It is expected that the sharp condition needed for $u$ to be boundary admissible is that $u$ is a Dirichlet solution of \eqref{eq_intro_wave} in a specific fractional Sobolev space depending on $q$.
However, to avoid technicalities, we do not further refine Proposition \ref{prop_regular} in this article.
\end{remark}

\subsubsection{The Main Result}

We are now ready to state our main observability estimate:

\begin{theorem} \label{thm_main}
Assume the setting and wave equation from Assumption \ref{ass.setting}.
Furthermore, assume that $\Omega$ satisfies the convexity property of Assumption \ref{assump_h0}, and assume $T$ is sufficiently large depending on $n$, $\sigma$, and $\Omega$.

Then, for any Dirichlet solution $u$ of \eqref{eq_intro_wave} on $( -T, T ) \times \Omega$,
\begin{equation*}
\square_\sigma u + X \cdot \bar{\nabla} u + V u = 0 \text{,} \qquad \mc{D}_q u \equiv 0 \text{,}
\end{equation*}
that is also boundary admissible with respect to $\Box_\sigma$, we have the observability estimate
\begin{equation*}
E [u] (0) \leqslant C \int_{ (-T,T) \times \Gamma } ( \mc{N}_q u )^2 \text{,}
\end{equation*}
where the constant $C > 0$ depends on $n, T, \Omega, X, V$.
\end{theorem}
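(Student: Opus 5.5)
The proof follows the standard Carleman route: a global Carleman estimate, here Theorem \ref{thm_carl_est_main} (stated later in the paper, and taken as available), combined with energy estimates.

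\textbf{Step 1: the Carleman weight.} Fix the convex boundary defining function $y$ from Assumption \ref{assump_h0}. On $(-T,T)\times\Omega$ I take a weight of the form
\[
f := \frac{y^{1-q}}{1-q} \cdot (\text{radial correction}) - c\, t^2 ,
\]
or, more likely, $f = \eta(y) - c t^2$ for a profile $\eta$ adapted to the boundary asymptotics $u \sim d_\Gamma^{q/2}$. The constant $c$ and the remaining parameters are chosen so that:
\begin{itemize}
\item $f>0$ on $\{t=0\}$;
\item $f<0$ near $t=\pm T$ once $T$ is large;
\item pseudoconvexity holds. This uses the tangential convexity \eqref{eq_angconcave} for directions along the level sets of $y$, together with a lower bound on the radial part, which needs $|\nabla y|$ bounded below away from $x_*$.
\end{itemize}
The critical point $x_*$ will probably require a separate argument: either modify the weight near $x_*$, or absorb that region using a large Carleman parameter and the fact that it is compactly contained in $\Omega$.

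\textbf{Step 2: the Carleman inequality.} For boundary admissible $u$ with $\mathcal D_q u = 0$, Theorem \ref{thm_carl_est_main} should give
\[
\lambda \int e^{2\lambda f}\big(|\bar\nabla u|^2 + \lambda^2 u^2 + d_\Gamma^{-2}u^2\big) \lesssim \int e^{2\lambda f}|\square_\sigma u|^2 + \lambda\int_{(-T,T)\times\Gamma} e^{2\lambda f}(\mathcal N_q u)^2 + \text{(terms at } t=\pm T).
\]
Boundary admissibility (Definition \ref{def_bdry_ads}) is exactly what is needed to identify the boundary limits of the multiplier fluxes with $(\mathcal N_q u)^2$ and to kill the $\partial_t u$ flux terms. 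The Hardy inequality (Proposition \ref{lemma_hardy_para}) controls the singular zeroth-order terms.

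\textbf{Step 3: absorbing lower-order terms.} Using $\square_\sigma u = -X\cdot\bar\nabla u - Vu$, we have
\[
|\square_\sigma u|^2 \lesssim |\bar\nabla u|^2 + d_\Gamma^{-2}u^2 .
\]
This is absorbed into the left side for $\lambda$ large. The $d_\Gamma^{-2}u^2$ term is controlled by a Hardy-type weighted bound, and it is important that the Carleman left side carries such a term.

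\textbf{Step 4: energy estimates and closing.} A standard energy estimate for $\square_\sigma$ with bounded $X$ and $d_\Gamma^{-1}V$ gives $E[u](\tau) \simeq E[u](0)$ up to factors $e^{CT}$. Here Hardy is used to show the twisted energy is coercive for $\sigma\in(-\tfrac34,0)$, and admissibility makes the boundary flux vanish. Then:
\begin{itemize}
\item On $|t|\le\delta$ we have $f \ge \epsilon > 0$, so the left side bounds $e^{2\lambda\epsilon}\int_{-\delta}^{\delta}E[u]\,dt \gtrsim e^{2\lambda\epsilon}E[u](0)$.
\item Near $t=\pm T$ we have $f\le -\epsilon$, so the terms there are bounded by $e^{-2\lambda\epsilon}E[u](0)$ times constants. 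In practice these are handled by inserting a time cutoff $\chi(t)$ and treating the commutator terms the same way.
\item Choosing $\lambda$ large then yields $E[u](0) \le C\int (\mathcal N_q u)^2$.
\end{itemize}

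\textbf{Main obstacle.} I expect the hardest part to be Step 1–2: building a single global weight that is simultaneously pseudoconvex for the wave operator, compatible with the $d_\Gamma^{q}$ boundary asymptotics so that the boundary terms reduce to $(\mathcal N_q u)^2$ with the right sign, and regular at $x_*$. The sign of $\sigma<0$ and the range $q\in(-1,0)$ should enter when verifying positivity of the boundary and zeroth-order bulk terms.
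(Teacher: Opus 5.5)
Your Steps 2--4 are the paper's argument: apply Theorem \ref{thm_carl_est_main} to a time cutoff $\psi u$, which is required because that estimate is stated only for $u$ supported in $\{|t|<T-\delta\}$, so there are no ``terms at $t=\pm T$''; absorb $X\cdot\bar\nabla u+Vu$ on $\{\psi=1\}$ into the $\lambda e^{h_j}$-weighted gradient terms and the $\mu^2\lambda e^{h_j}y_j^{2q-2}u^2$ term for large $\lambda$; bound the cutoff region by the smaller weight times $\int E[u]$; and close with the energy estimate, where the largeness of $T$ (the paper takes $T>\mu b^{1+q}/(1+q)$) is what separates the weights at $t=0$ from those near $t=\pm T$. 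Your Step 1 is not needed, and its guesses do not match the actual estimate: the weights are $f_j=a-e^{h_j}$ built from $y_j^{1+q}$ for two foliations $y_1,y_2$ with distinct critical points, precisely because the negative zeroth-order terms near $x_*$ cannot be absorbed by enlarging $\lambda,\mu$, so your Step 4 should simply be run for each $f_j$ and summed over $j$.
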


\begin{remark}
See Remark \ref{T_bound} for a more specific lower bound for $T$.
\end{remark}

To our best knowledge, this is the first boundary observability estimate for \eqref{eq_intro_wave} for general domains and in all dimensions, at least for general time-dependent lower-order terms.
(However, we do note the recent \cite{dehoop2026boundaryobservabilitygasgiant}, which treated both observability and controllability for a general class of degenerate wave equations, though only for \emph{time-independent} operators.)

We also expect that our results can be extended to geometric settings, that is, with $\Delta$ replaced by the Laplace-Beltrami operator for a Riemannian metric.
Our methods may also extend to various degenerate wave equations in higher dimensions, which share many of the qualitative features described above---most notably, boundary traces behaving like powers of $d_\Gamma$.
However, we save these questions for future research.

On the other hand, we emphasize that Theorem \ref{thm_main} does \emph{not} imply boundary controllability for \eqref{eq_intro_wave}.
The reason is the same as in \cite{MR4275478}---the duality argument from observability to controllability also requires the opposite ``hidden regularity" estimate
\[
\int_{ (-T,T) \times \Gamma } ( \mc{N}_q u )^2 \leqslant C \cdot E [u] (0) \text{,}
\]
which is expected to be false.
In fact, the results of \cite{MR3227458} suggest observability and hidden regularity can simultaneously hold only when $E [u]$ is replaced by a Sobolev norm of fractional order $1 - \frac{q}{2} > 1$.
Thus, in order to obtain boundary control for \eqref{eq_intro_wave} via duality, one would need both observability and hidden regularity at this fractional order $1 - \frac{q}{2}$.
This would introduce significant new difficulties, which we relegate to future research.

\begin{remark}
On the other hand, the observability estimate in \cite{MR5023025} for the critically singular heat equation does lead to boundary controllability even without using sharp fractional Sobolev norms.
This is due to the additional smoothing that is available for parabolic equations.
\end{remark}

Lastly, recall we always assumed $\sigma < 0$ in Assumption \ref{ass.setting}.
It would of course be natural to consider the opposite case $\sigma \in ( 0, \frac{1}{4} )$ (corresponding to $0 < q < 1$).
However, in this case, the expected sharp Sobolev order for observability and hidden regularity would be $1 - \frac{q}{2} < 1$.
In particular, one expects Theorem \ref{thm_main} to be false for $\sigma \in ( 0, \frac{1}{4} )$, hence boundary observability for this setting is likely out of reach through the techniques of this article.

\begin{remark}
One could also consider the highly negative case $\sigma \leq -\frac{3}{4}$.
However, in this setting, the expected notion of Neumann trace would involve more than one normal derivative of $u$, leading to additional technical complications.
\end{remark}

\subsection{Carleman estimate}

The key ingredient for proving Theorem \ref{thm_main} is a novel global Carleman estimate for $\Box_\sigma$ that captures the Neumann boundary trace on $\Gamma$ and holds for any $\Omega$ satisfying Assumption \ref{assump_h0}.
Since the argument deriving observability from this Carleman estimate is standard, we focus the remainder of the introduction on the Carleman estimate itself.
Furthermore, since many ideas here are similar to those used for treating the singular heat equation in \cite{MR5023025}, we refer the reader to \cite{MR5023025} for additional insights.

In particular, our objective is to prove a weighted estimate roughly of the form
\begin{align}
\label{carleman_goal} &\int_{ ( -T, T ) \times \Omega } e^{ -2 \lambda f } | \Box_\sigma u |^2 + \int_{ ( -T, T ) \times \Gamma } e^{ -2 \lambda f } w_b | \mc{N}_q u |^2 \\
\notag &\quad \gtrsim \int_{ ( -T, T ) \times \Omega } e^{ -2 \lambda f } ( w_t | \partial_t u |^2 + w_x | \nabla_x u |^2 + w_0 u^2 ) \text{,}
\end{align}
where $e^{ -2 \lambda f }$ is the exponential Carleman weight, and where $w_b$, $w_t$, $w_x$, $w_0$ represent additional weights whose precise forms are not so important for the present discussion.
A crucial challenge is to find an appropriate $f$ so that \eqref{carleman_goal} holds as desired.

For our Carleman estimate, our weight function will have the form
\begin{equation} \label{intro_f}
f ( t, x ) := a - \exp h ( t, x ) \text{,} \qquad h ( t, x ) := \frac{\mu}{1+q} [ b^{1+q} - ( y (x) )^{1+q} ] - \frac{ t^2 }{ T } + \beta \text{,}
\end{equation}
where $a, \beta > 0$ are fixed constants, $b := \sup_\Omega y = y ( x_\ast )$, and $\mu > 0$ is a sufficiently large constant depending on $n$, $\Omega$, $\sigma$.
As is standard for physical space derivations of Carleman estimates, the main computational tasks are to conjugate the wave operator with the weight $e^{ -\lambda f }$, apply a multiplier $S v$ whose principal part is $\bar{\nabla} f$, and integrate by parts.

The main features of \eqref{intro_f} can be found in the function $h$.
First, the exponent $1 + q$ for $y$ in \eqref{intro_f} is essential; just as in \cite{MR5023025}, this specific power is needed so that after the requisite integrations by parts, one captures---courtesy of the boundary admissibility conditions of Definition \ref{def_bdry_ads})---precisely the Neumann data on the boundary.

Another crucial feature of $h$, which is exclusive to wave equation settings and is a consequence of the convexity properties \eqref{eq_angconcave} of $y$, is that the level sets of $h$ are pseudoconvex with respect to $\Box$, as defined in \cite{MR781537}.\footnote{This can be verified by an extensive computation. However, we avoid doing this in this paper, as we will not need such a precise calculation to complete our Carleman estimate.}
This is ultimately responsible for the positive first-order terms on the right-hand side of \eqref{carleman_goal}.

Notice also that our Carleman weight $e^{ - \lambda f }$ is a double exponential, with two independent large parameters $\lambda$, $\mu$.
While this is in contrast to single exponential weights used in \cite{MR4275478, MR5023025, MR4781095}, double exponential weights are a well-known tool for proving Carleman estimates; see, e.g.\ \cite{MR3046295}.
Here, the advantage of the double exponential is that it generates additional positivity for derivatives of $u$ in the $\nabla y$-direction; as a result of this, we need only assume convexity of $y$ in directions tangent to its level sets.
This gain is essential, as the boundary distance $d_\Gamma$ (which coincides with $y$ near $\Gamma$) by definition fails to be convex in non-tangent directions.
(This is in particular true for the prototypical case $\Omega := B ( 0, 1 )$.)

The above considerations suffice to produce a Carleman estimate of the form \eqref{carleman_goal}, \emph{except} near the critical point $x_\ast$ of $y$, where the zero-order weight $w_0$ can be negative.
(This is because the non-negative parts of $w_0$ contain powers of $| \nabla y |$, which vanish at $x_\ast$; see the discussions in \cite{MR5023025} for details.)
The remedy for this issue is the same as in \cite{MR5023025}; we consider \emph{two} convex boundary defining functions, $y_1 := y$ and $y_2$, with distinct critical points $x_{1,*} \neq x_{2,*}$.
(In fact, $y_2$ can be constructed by slightly perturbing $y_1$ away from $\Gamma$; see Lemma \ref{lemma_bdpair}.)

To deal with the negative terms coming from the $y_1$-estimate (near $x_{1,\ast}$), we absorb this contribution into the $y_2$-estimate.
(Since $x_{1,\ast}$ is away from $x_{2,\ast}$, the $y_2$-estimate is positive near $x_{1, \ast}$.)
Similarly, the negative part from the $y_2$-estimate (near $x_{2,\ast}$) can be absorbed into the $y_1$-estimate.
Indeed, both negative parts can be dealt with simultaneously as long as the corresponding weights $f_1$, $f_2$ are appropriately matched, via the constants $\beta_1$, $\beta_2$.\footnote{Similar tricks involving adding two Carleman estimates were also employed in \cite{alex_shao:uc_nlwf, MR4314050, MR4450884, MR4797140}.}

% Other wave Carleman estimates: \cite{MR2361985, MR3964826, MR4314050, MR4450884, MR4797140, MR5023025}

Finally, the precise statement of our global Carleman estimate, which is of independent interest apart from our observability result, can be found in Theorem \ref{thm_carl_est_main}.

\subsection{Outline}

In Section \ref{sec_prelim}, we present some preliminary results required to derive the Carleman estimate.
The heart of the analysis lies in Section \ref{sec_CE}, where we derive our Carleman estimate.
Finally, in Section \ref{sec_obs}, we apply our Carleman estimate to prove our main observability result, Theorem \ref{thm_main}.

\subsection{Acknowledgement}
The authors are heavily indebted to Bruno Vergara for sharing his preliminary notes and computations.
The authors also thank Alberto Enciso for helpful discussions.
VKJ acknowledges support from the National Board for Higher Mathematics (NBHM), Department of Atomic Energy, Government of India, under the NBHM Postdoctoral Fellowship grant number 0204/16(7)/2024/R\&D-II/6758.

\section{Preliminaries} \label{sec_prelim}

In this section, we present some properties that are needed to prove our Carleman and observability estimates.
Throughout, we adopt the setting given in Assumption \ref{ass.setting}.
We begin with a few notational conventions that will be useful in our analysis:

\begin{definition} %\label{def_domains}
We will use the following abbreviations for our space domains:
\begin{equation*} %\label{eq_domains}
\mc{C} := (-T,T) \times \Omega \text{,} \qquad \p \mc{C} := ( -T, T ) \times \Gamma \text{.}
\end{equation*}
\end{definition}

\begin{definition} \label{def_geom}
It will sometimes be more convenient to use tensor index notations:
\begin{itemize}
\item Let $\eta$ denote the Minkowski metric on $\R^{1+n}$, given in Cartesian coordinates as
\[
\eta := - d x_0^2 + d x_1^2 + \dots + d x_n^2 \text{,} \qquad x_0 := t \text{.}
\]

\item We will use (subscript and superscript) lowercase Greek letters, ranging from $0$ to $n$, to denote (covariant and contravariant) spacetime components over $\R^{1+n}$.
For our purposes, it suffices to index only in terms of Cartesian coordinates, i.e.
\[
[ \eta_{ \alpha \beta } ]_{ 0 \leq \alpha, \beta \leq n } = \operatorname{diag} ( -1, 1, \dots, 1 ) = [ \eta^{ \alpha \beta } ]_{ 0 \leq \alpha, \beta \leq n } \text{.}
\]

\item Indices repeated in subscript and superscript are to be summed from $0$ to $n$, e.g.
\[
Z_\alpha Y^\alpha := \sum_{ \alpha = 0 }^n Z_\alpha Y^\alpha \text{.}
\]

\item Indices are raised and lowered with respect to $\eta$, e.g.
\[
Z^\alpha := \eta^{ \alpha \beta } Z_\beta \text{,} \qquad Y_\alpha := \eta_{ \alpha \beta } Y^\beta \text{.}
\]
\end{itemize}
\end{definition}

For readers less accustomed to tensor calculus, the practical part is that indices (e.g.\ $\alpha$, $\beta$) refer to both spatial and time components; raising or lowering an index leaves a spatial component unchanged but flips the sign of the time component.
Most importantly, according to Definition \ref{def_geom}, we have, for sufficiently regular $\phi, \psi: ( -T, T ) \times \Omega \rightarrow \R$, the identities
\begin{align*} %\label{index_practical} 
\nabla^\alpha \phi \nabla_\alpha \psi = - \partial_t \phi \partial_t \psi + \nabla_x \phi \cdot \nabla_x \psi \text{,} \qquad \nabla^\alpha{}_\alpha \phi = - \partial_t^2 \phi + \Delta_x \phi = \Box \phi \text{.}
\end{align*}

It will also be useful to consider derivatives tangent and normal to a given CBDF:

\begin{definition} %\label{def_y_deriv}
Given a CBDF $y \in C^4 ( \Omega )$ for $\Omega$:
\begin{itemize}
\item We let $\slashed\nabla$ denote the covariant derivative on the level sets of $y$.

\item We let $D_y$ denote the following normal derivative to level sets of $y$:
\begin{equation*} %\label{eq_y_deriv}
D_y := \nabla y \cdot \nabla \text{.}
\end{equation*}

\item We let $P_{ \sigma, y }$ denote the modified singular wave operator
\begin{equation} \label{P_sigma}
P_{\sigma,y} := \Box + \frac{\sigma}{y^2} = -\p_t^2 + \Delta + \frac{\sigma}{y^2} \text{.}
\end{equation}
\end{itemize}
\end{definition}

In particular, $P_{ \sigma, y }$ is better for analysis than $\Box_\sigma$, since $y$ remains smooth away from $\Gamma$.
Moreover, one can, in practice, freely replace $\Box_\sigma$ by $P_{ \sigma, y }$.
Indeed, this is because $y = d_\Gamma$ near $\Gamma$, so that our singular wave equation \eqref{eq_intro_wave} can be rewritten as
\[
P_{ \sigma, y } u + X \cdot \bar{\nabla} u + V_y u = 0 \text{,} \qquad V_y = V + \sigma ( d_\Gamma^{-2} - y^{-2} ) \in d_\Gamma^{-1} \cdot L^\infty ( ( -T, T ) \times \Omega; \R ) \text{.}
\]
Henceforth, we will work with $P_{ \sigma, y }$ rather than $\square_\sigma$ without further comment.

\subsection{Boundary Defining Functions}

We now establish a few key properties of CBDFs that we will need for our main results.
First, we verify that the unit ball $\Omega := B ( 0, 1 )$---the setting of \cite{MR4275478}---satisfies Assumption \ref{assump_h0}, hence our results indeed generalise those of \cite{MR4275478}.

\begin{proposition} \label{h0_ball}
Assumption \ref{assump_h0} holds for the unit ball $\Omega := B (0, 1)$ for any $n$.
In other words, there exists a CBDF on $B ( 0, 1 )$.
\end{proposition}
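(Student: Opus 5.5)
We construct the CBDF explicitly as a radial function $y(x) = \phi(r)$, where $r := |x|$ and $\phi : [0,1] \rightarrow [0,\infty)$ is a smooth profile to be chosen. On $B(0,1)$ we have $d_\Gamma(x) = 1 - r$. We therefore require $\phi(r) = 1 - r$ for $r \in (1 - d_0, 1]$, with some small $d_0$ (say $d_0 = \frac{1}{4}$). Near $r = 0$ we take $\phi(r) = c - \frac{r^2}{2}$ for $r < r_0$, with constants chosen so that the two pieces can be glued smoothly. Concretely, we pick $\phi \in C^\infty([0,1])$ satisfying:
\begin{itemize}
\item $\phi'(r) < 0$ for all $r \in (0,1]$;
\item $\phi$ is even near $0$, in fact quadratic there, so that $y$ is smooth at the origin;
\item $\phi$ matches $1 - r$ on $[1 - d_0, 1]$.
\end{itemize}
Such a $\phi$ can be written as $\phi(r) = \int_r^1 g(s)\, ds$, where $g \geq 0$ is smooth, $g(s) = s$ near $0$, $g(s) = 1$ near $[1 - d_0, 1]$, and $g > 0$ on $(0,1]$. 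For instance, take $g = \chi(s)\, s + (1 - \chi(s))$ with a cutoff $\chi$ equal to $1$ near $0$ and $0$ on $[1/2, 1]$.

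Next we verify properties (1)--(3) of Definition \ref{def_bdf}. Property (1) holds because $\phi(1) = 0$ and $\phi' < 0$ on $(0,1]$, so $\phi > 0$ on $[0,1)$. Property (2) holds by construction. For property (3), note that $\nabla y = \phi'(r)\, x/r$ vanishes only where $\phi'(r) = 0$. Since $\phi' = -g$ vanishes only at $r = 0$, the unique critical point is $x_\ast = 0$. Finally, $d_\Gamma(0) = 1 > d_0$.

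The main point is the convexity condition \eqref{eq_angconcave}. For a radial function,
\[
\nabla^2 y = \phi''(r)\, \frac{x \otimes x}{r^2} + \frac{\phi'(r)}{r} \left( I - \frac{x \otimes x}{r^2} \right) \text{.}
\]
The level sets of $y$ are spheres $\{ r = \text{const} \}$ away from $0$, so tangent vectors $\xi$ satisfy $\xi \cdot x = 0$. Hence
\[
-\xi \cdot \nabla^2 y \cdot \xi = -\frac{\phi'(r)}{r}\, |\xi|^2 = \frac{g(r)}{r}\, |\xi|^2 \text{.}
\]
It therefore suffices to show that $g(r)/r$ is bounded below by a positive constant $\gamma$ on $(0,1)$.
\begin{itemize}
\item Near $0$, $g(r)/r = 1$.
\item On $[1 - d_0, 1)$, $g(r)/r = 1/r \geq 1$.
\item In between, $g(r)/r$ is continuous and positive on a compact interval, so it has a positive minimum.
\end{itemize}
At $x = 0$ itself $\nabla^2 y = -I$, so \eqref{eq_angconcave} holds there for every $\xi$, whatever "tangent" is taken to mean at the critical point.

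The only mild subtlety is ensuring $C^4$ (here $C^\infty$) regularity of $y$ at the origin, which the quadratic choice near $0$ guarantees. We also need positivity of $g$ in the gluing region. This holds for the convex combination $\chi s + (1 - \chi)$, since both $s$ and $1$ are positive on $(0,1]$. The argument uses nothing about the dimension $n$, so it applies for all $n \geq 1$. When $n = 1$ there are no tangent directions, and \eqref{eq_angconcave} holds vacuously.
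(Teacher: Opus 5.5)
Your proof is correct and takes the same route as the paper. Both use a radial profile that equals $1-r$ near $\partial B(0,1)$ and is quadratic near the origin, so that for tangent $\xi$ the quantity $-\xi\cdot\nabla^2 y\cdot\xi$ reduces to $-\phi'(r)/r\,|\xi|^2$, and this is bounded below.

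Your version is in fact slightly more careful than the paper's:
\begin{itemize}
\item You require $\phi'<0$ on $(0,1]$. The paper only asks its profile to be strictly increasing, which does not by itself rule out an isolated vanishing derivative, and such a zero would create a sphere of critical points.
\item Your Hessian sign is right, whereas the paper's formula for $\nabla^2 y$ has a sign slip.
\item With your $g=\chi s+(1-\chi)$, one has $g(r)/r\geq 1$ on all of $(0,1]$, so you may simply take $\gamma=1$ without any compactness argument.
\end{itemize}
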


\begin{proof}
Indeed, let $r = | x |$ denote the distance from the origin.
We define $y$ as
\[
y := 1 - g (r) \text{,}
\]
where $g \in C^\infty ( 0, 1 )$ is a strictly increasing function satisfying
\begin{equation} \label{h0_ball_0}
g (r) := \begin{cases} r & r_1 \leq r \leq 1 \text{,} \\ r^2 & 0 \leq r \leq r_0 \text{,} \end{cases} \qquad 0 < r_0 < r_1 < 1 \text{.}
\end{equation}
Then, by definition, $y > 0$ on $B ( 0, 1 )$, and $y = 1 - r = d_\Gamma$ near $\partial B ( 0, 1 )$.
Moreover, since $g$ is strictly increasing, the only critical point of $y$ is at $x_\ast = 0$.

A direct computation yields that
\begin{equation} \label{h0_ball_1}
\nb^2 y (x) = \frac{g'(r)}{r} I_{n \times n} + \left( g''(r) - \frac{g'(r)}{r} \right) \frac{ x \otimes x }{r^2}.
\end{equation}
Moreover, since the level sets of $y$ are simply the level sets of $r$, any $\xi \in \R^n$ that is tangent to the level set of $y$ at $x$ satisfies $\xi \cdot x = 0$. 
Thus, for such $\xi$, the formula \eqref{h0_ball_1} gives
\begin{equation} \label{h0_ball_2}
\nb^2 y ( \xi, \xi ) = - \frac{g'(r)}{r} |\xi|^2 \text{.}
\end{equation}
Finally, the formula \eqref{h0_ball_0} implies a positive lower bound for $- \frac{ g' (r) }{r}$,
\[
- \frac{g'(r)}{r} \geqslant \gamma > 0 \text{,}
\]
hence we conclude from \eqref{h0_ball_2} the convexity estimate \eqref{eq_angconcave}.
\end{proof}

\begin{remark}
By similar arguments, one can also show that many other convex domains $\Omega$ (i.e.\ having strictly convex $C^4$-boundaries) also satisfy Assumption \ref{assump_h0}.
\end{remark}

We also note the following general properties of CBDFs:

\begin{proposition} % \label{thm_hess_y_xx}
Let $y$ be a CBDF for $\Omega$, and assume the quantities from Definition \ref{def_bdf}.
\begin{enumerate}
\item For any $x$ with $d_\Gamma (x) < d_0$, we have
\begin{equation*} %\label{eq_nby_1}
|\nb y|^2 = 1 \text{.}
\end{equation*}

\item For all $\xi \in \R^n$, we have
\begin{equation*} %\label{eq_hess_y_xx}
- \xi \cdot \nb^2 y |_{x_*} \cdot \xi \geqslant \gamma | \xi |^2 \text{.}
\end{equation*}
In particular, on a sufficiently small neighbourhood of $x$, we have, for any $\xi \in \R^n$,
\begin{equation} \label{eq_hess_y_x*}
- \xi \cdot \nb^2 y \cdot \xi \geqslant \gamma | \xi |^2 \text{.}
\end{equation}
\end{enumerate}
\end{proposition}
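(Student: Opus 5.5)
Both parts follow directly from Definition \ref{def_bdf}; the only point needing care is the second, where convexity is known only in tangential directions.

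\emph{Part (1).} By Definition \ref{def_bdf}(2), $y = d_\Gamma$ on $\{ d_\Gamma < d_0 \}$. Since $\Gamma$ is $C^4$ and $d_0 \ll 1$, this strip lies inside the tubular neighbourhood of $\Gamma$ on which $d_\Gamma$ is smooth. On that neighbourhood $d_\Gamma$ satisfies the eikonal equation $|\nabla d_\Gamma| = 1$: each point has a unique nearest boundary point, and $\nabla d_\Gamma$ is the inward unit normal there. Hence $|\nabla y|^2 = 1$ in the strip.

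\emph{Part (2), the pointwise bound at $x_*$.} At the critical point $x_*$ the level set of $y$ degenerates, so the notion of ``tangent to the level set'' must be reinterpreted by a limiting argument.

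\begin{itemize}
\item By Definition \ref{def_bdf}(3), $\nabla y (x_*) = 0$ and $x_*$ is the only critical point. Since $y > 0$ in $\Omega$ and $y \to 0$ at $\Gamma$, $x_*$ is an interior maximum (indeed $b = y(x_*) = \sup y$).
\item Fix $\xi \in \R^n$ with $|\xi| = 1$. The goal is a sequence $x_k \to x_*$ with $\nabla y (x_k) \neq 0$ and unit vectors $\xi_k \perp \nabla y (x_k)$ such that $\xi_k \to \xi$.
\item Given such a sequence, \eqref{eq_angconcave} yields $-\xi_k \cdot \nabla^2 y (x_k) \cdot \xi_k \geq \gamma$. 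Because $y \in C^4$, $\nabla^2 y$ is continuous, so letting $k \to \infty$ gives $-\xi \cdot \nabla^2 y (x_*) \cdot \xi \geq \gamma$, which is the claim.
\end{itemize}

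\emph{Constructing the sequence.} Take any $x_k \to x_*$ with $x_k \neq x_*$. These points are not critical, so $\nabla y (x_k) \neq 0$ and $\nu_k := \nabla y (x_k) / |\nabla y (x_k)|$ is well defined. By compactness, pass to a subsequence with $\nu_k \to \nu$. Now split into two cases.

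\begin{itemize}
\item If $\xi \neq \pm \nu$, set $\xi_k := \frac{\xi - (\xi \cdot \nu_k) \nu_k}{|\xi - (\xi \cdot \nu_k) \nu_k|}$. Then $\xi_k \perp \nabla y (x_k)$ and $\xi_k \to \frac{\xi - (\xi \cdot \nu)\nu}{|\cdot|}$. This is the normalised projection of $\xi$, not $\xi$ itself, so the limit only gives the bound for the projection of $\xi$ onto $\nu^\perp$.
\item The fix is to choose the approach direction so that $\nu \perp \xi$. Approach $x_*$ along a line $x_k = x_* + s_k e$ with $e \perp \xi$. If $\nabla^2 y (x_*)$ is nondegenerate, then $\nabla y (x_k) \approx s_k \nabla^2 y (x_*) e$, so $\nu \propto \nabla^2 y (x_*) e$. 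The direction $e$ should then be picked so that $\nabla^2 y (x_*) e \perp \xi$, i.e.\ $e \perp \nabla^2 y (x_*) \xi$, which can be done for $n \geq 2$.
\end{itemize}

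\emph{The degenerate case.} To avoid assuming nondegeneracy of $\nabla^2 y (x_*)$, I would instead use the sphere argument, which is cleaner.

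\begin{itemize}
\item Consider a small sphere $S_\epsilon$ about $x_*$. The continuous map $x \mapsto \nabla y (x)/|\nabla y(x)|$ from $S_\epsilon$ to the unit sphere has nonzero degree. Indeed, $\nabla y$ has an isolated zero at a nondegenerate-index maximum; its index is $(-1)^n$, using that the sublevel sets near the maximum are balls via Morse-type reasoning.
\item Being of nonzero degree, this map is surjective. So the image meets the great sphere $\xi^\perp$, giving points $x_\epsilon \in S_\epsilon$ with $\nabla y (x_\epsilon) \perp \xi$.
\item Then $\xi$ is itself tangent to the level set at $x_\epsilon$, and \eqref{eq_angconcave} gives $-\xi \cdot \nabla^2 y (x_\epsilon) \cdot \xi \geq \gamma$. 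Let $\epsilon \to 0$.
\end{itemize}

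Establishing this index/degree claim is the main obstacle. The simplest route is a homotopy: along $\nabla y$ near the maximum, the sublevel sets are star-shaped, so the Gauss map $\nabla y / |\nabla y|$ is homotopic to the radial map $-(x-x_*)/|x-x_*|$. For $n = 1$ the tangent space of a level set is $\{0\}$, so \eqref{eq_angconcave} is vacuous and this argument does not apply.

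\emph{The neighbourhood bound \eqref{eq_hess_y_x*}.} This follows from the pointwise bound at $x_*$ by continuity of $\nabla^2 y$, after replacing $\gamma$ by $\gamma/2$ (or shrinking the neighbourhood to keep a constant arbitrarily close to $\gamma$).
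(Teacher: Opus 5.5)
Your argument is correct for $n \geq 2$ and rests on the same idea as the paper. The paper treats (1) as immediate from $y = d_\Gamma$ and supports (2) only with the remark that, as $x \to x_*$, the tangent directions of the level sets ``span all of $\R^n$''. You are right that this remark needs sharpening: an arbitrary approach sequence only controls $\nabla^2 y(x_*)$ on $\nu^\perp$ for the limiting normal $\nu$. So for each fixed $\xi$ you need points $x \to x_*$ with $\nabla y(x) \neq 0$ and $\nabla y(x) \perp \xi$ exactly.

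The one step you leave unproved is the degree claim that produces these points. That an isolated maximum has index $(-1)^n$ is a classical theorem you could cite. However, the star-shapedness you invoke is not automatic at a general isolated maximum; here it could be recovered from \eqref{eq_angconcave}, which makes the superlevel sets near $x_*$ convex.

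It is simpler to bypass degree theory altogether:
\begin{itemize}
\item Fix unit vectors $\xi$ and $e \perp \xi$; this is where $n \geq 2$ enters.
\item Since $x_*$ is the unique, hence strict, global maximum of $y$, for small $\epsilon > 0$ one has $y(x_* \pm \epsilon\xi) < y(x_*)$.
\item By continuity there is $\rho \in (0,\epsilon)$ with $y(p \pm \epsilon\xi) < y(p)$, where $p := x_* + \rho e$.
\item Then $s \mapsto y(p + s\xi)$ attains its maximum over $[-\epsilon,\epsilon]$ at an interior point $s_\epsilon$, so $x_\epsilon := p + s_\epsilon \xi$ satisfies $\xi \cdot \nabla y(x_\epsilon) = 0$.
\item Moreover $x_\epsilon \neq x_*$, since its $e$-component is $\rho$. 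Hence $\nabla y(x_\epsilon) \neq 0$, and $\xi$ is tangent to the level set through $x_\epsilon$.
\item Apply \eqref{eq_angconcave} at $x_\epsilon$ and let $\epsilon \to 0$, using $|x_\epsilon - x_*| \leq 2\epsilon$ and continuity of $\nabla^2 y$.
\end{itemize}

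Two further remarks. For $n = 1$ it is not just your argument that fails: the statement itself is false. In that case \eqref{eq_angconcave} is vacuous, so it holds with any $\gamma > 0$ for every $y$ obeying items (1)--(3) of Definition \ref{def_bdf}, including one with $y''(x_*) = 0$. In one dimension the Hessian bound at $x_*$ must therefore be imposed as an extra hypothesis. Finally, you are right to weaken the constant to $\gamma/2$ in the neighbourhood bound \eqref{eq_hess_y_x*}. Continuity does not preserve $\gamma$ itself, and $\gamma/2$ is what the proof of Theorem \ref{thm_carl_anew} actually uses.
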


Note (1) is an immediate consequence of the assumption that $y = d_\Gamma$ near $\Gamma$.
For (2), the idea is that the function $y$ will become convex in \emph{all} directions at the critical point $x_\ast$, since in the limit $x \rightarrow x_\ast$, the directions tangent to level sets of $y$ span all of $\R^n$.

Finally, we show that if $\Omega$ has a CBDF $y$, then $y$ can be perturbed into another CBDF \emph{with a different critical point}.
The ideas and proof are analogous to those in \cite{MR5023025}, though slight modifications are required due to the extra convexity requirement \eqref{eq_angconcave}.

\begin{definition} %\label{def_bdf_pair}
We say that $(y_1,y_2)$ is a convex boundary defining pair for $\Omega$ iff:
\begin{enumerate}
\item $y_1$ and $y_2$ are CBDFs for $\Omega$.

\item If $x_{1,*}$ and $x_{2,*}$ are the unique critical points of $y_1$ and $y_2$, respectively, then $x_{1,*} \neq x_{2,*}$.
\end{enumerate}
\end{definition}

\begin{lemma} \label{lemma_bdpair}
Suppose $y = y_1$ is a CBDF for $\Omega$.
Then there exists a CBDF $y_2$ for $\Omega$ such that $(y_1,y_2)$ is a convex boundary defining pair for $\Omega$.
\end{lemma}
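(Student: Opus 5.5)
We will construct $y_2$ by perturbing $y_1$ only in a small ball around its critical point $x_{1,*}$. There the full Hessian of $y_1$ is uniformly negative definite, by \eqref{eq_hess_y_x*}. Concretely, fix $r>0$ small so that three things hold:
\begin{itemize}
\item $\bar B(x_{1,*},2r)\subseteq\{d_\Gamma>d_0\}$;
\item $-\xi\cdot\nabla^2 y_1\cdot\xi\geq\gamma|\xi|^2$ for all $\xi\in\R^n$ on $B(x_{1,*},2r)$;
\item $|\nabla y_1|\geq c_r>0$ on $\Omega\setminus B(x_{1,*},r)$, which is possible since $x_{1,*}$ is the only critical point and $|\nabla y_1|=1$ near $\Gamma$.
\end{itemize}
Pick a cutoff $\chi\in C_c^\infty(B(x_{1,*},2r))$ with $\chi\equiv1$ on $B(x_{1,*},r)$, and a fixed unit vector $e$. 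Set
\[
y_2:=y_1+\varepsilon\,\chi\,(e\cdot(x-x_{1,*})),
\]
with $\varepsilon>0$ small. Properties (1) and (2) of Definition \ref{def_bdf} are immediate: $y_2=y_1$ outside $B(x_{1,*},2r)$, and $y_2>0$ there for $\varepsilon$ small since $\min_{\bar B}y_1>0$. Also $y_2\in C^4$.

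\textbf{Critical point.} On $\Omega\setminus B(x_{1,*},r)$ we have $|\nabla y_2-\nabla y_1|\leq C\varepsilon$, so $\nabla y_2\neq0$ once $C\varepsilon<c_r$. On $B(x_{1,*},r)$ we have $\nabla y_2=\nabla y_1+\varepsilon e$, and $y_2$ is strictly concave there. So $y_2$ has at most one critical point in that ball. By the inverse function theorem for $\nabla y_1$ at $x_{1,*}$ (its differential $\nabla^2y_1(x_{1,*})$ is invertible), there is exactly one, $x_{2,*}$, with $\nabla y_1(x_{2,*})=-\varepsilon e\neq0$. Hence $x_{2,*}\neq x_{1,*}$, and $|x_{2,*}-x_{1,*}|=O(\varepsilon)<r$. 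Thus (3) holds, with $d_\Gamma(x_{2,*})>d_0$.

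\textbf{Convexity (4).} This is the main obstacle, because the level sets of $y_2$ differ from those of $y_1$, so the tangent directions change. We split the domain into three regions.
\begin{itemize}
\item On $B(x_{1,*},r)$ we have $\nabla^2y_2=\nabla^2y_1$, which is negative definite in all directions, so \eqref{eq_angconcave} holds with $\gamma$.
\item Outside $B(x_{1,*},2r)$, $y_2=y_1$ and nothing changes.
\item On the annulus $A:=B(x_{1,*},2r)\setminus B(x_{1,*},r)$ we argue by continuity.
\end{itemize}
On $A$, both $|\nabla y_1|,|\nabla y_2|\geq c_r/2$, and $\|y_2-y_1\|_{C^2}\leq C\varepsilon$. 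The unit normals $\nabla y_i/|\nabla y_i|$ therefore differ by $O(\varepsilon)$. Any $\xi$ tangent to a level set of $y_2$ decomposes as $\xi=\xi^T+\xi^N$, with $\xi^T$ tangent to the level set of $y_1$ and $|\xi^N|\leq C\varepsilon|\xi|$. Expanding the quadratic form then gives
\[
-\xi\cdot\nabla^2y_2\cdot\xi\geq\gamma|\xi^T|^2-C\varepsilon|\xi|^2\geq(\gamma-C'\varepsilon)|\xi|^2 .
\]
(On $A$ one could even use the full negative definiteness, which avoids this step entirely.) Choosing $\varepsilon$ small yields (4) with constant $\gamma/2$. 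Hence $y_2$ is a CBDF with $x_{2,*}\neq x_{1,*}$, and $(y_1,y_2)$ is a convex boundary defining pair.
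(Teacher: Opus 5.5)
Your proposal is correct and follows essentially the same route as the paper: add a small cut-off linear tilt to $y_1$ near $x_{1,*}$, use the gradient lower bound on the transition annulus to rule out new critical points, and use smallness of the $C^2$ perturbation (together with full negative definiteness of the Hessian near $x_{1,*}$) to preserve convexity. The only difference is that you tilt in Cartesian coordinates and locate the new critical point via the inverse function theorem and strict concavity, whereas the paper tilts in Morse coordinates $z$ and solves $2Az+\delta b=0$ explicitly; your choice keeps $y_2\in C^4$ (Morse coordinates for a $C^4$ function can lose regularity) and makes $\nabla^2 y_2=\nabla^2 y_1$ exactly on the inner ball.
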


\begin{proof}
The Morse lemma shows that there exists a neighbourhood $U \subset \{ d_\Gamma > 2d_0 \}$ of $x_{1,*}$ and local coordinates $z : U \rightarrow \R^n $ such that $y_1$ is a quadratic form on $U: y_1 = z \cdot A \cdot z$, with $A$ a non-singular $n \times n$ matrix. We can also assume that $z(x_{1,*}) = 0$ and $U$ is an open ball $B_{2\eta}(0)$ in $z$-coordinates, for small enough $\eta>0$. Since $x_{1,*}$ is a non-degenerate critical point, by Taylor's theorem, we get that 
\begin{equation} \label{eq_bdfpair_nabla}
|\nb y_1| \geqslant c_0 > 0, \text{ on } U \setminus U',
\end{equation}
where $U'=B_\eta(0)$ in $z$-coordinates. Now let $\chi \in C^\infty(\Omega)$ be a cut-off function such that $\chi \equiv 1$ on $U'$ and $\chi \equiv 0$ on $\Omega \setminus U$. Then we define the function $y_2$ as follows
\begin{equation} \label{eq_bdfpair_delta}
y_2 := y_1 + \delta \chi b \cdot z, \quad b \in \mathbb{S}^{n-1}, \delta \ll 1.
\end{equation}
Note that, $y_2 = y_1$ on $\Omega \setminus U$. Hence, $y_2$ satisfies the same properties as $y_1$ on the region $\{ d_\Gamma \leqslant 2 d_0 \}$. 

For the region $\{ d_\Gamma > 2 d_0 \}$, we see that on $U'$
\[ \nb_z y_2 = 2 A \cdot z + \delta b, \]
which implies that there is a critical point of $y_2$ at
\[ z(x_{2,*}) = - \frac{1}{2} \delta A^{-1} b \neq 0, \]
which is unique in $U'$ as $A$ is invertible. Next, note that taking small enough $\delta$ in \eqref{eq_bdfpair_delta} and using \eqref{eq_bdfpair_nabla}, ensures that $|\nb_z y_2| > 0$ in $U \setminus U'$. That is, no new critical point arises in the region $U \setminus U'$. Furthermore, again using \eqref{eq_bdfpair_delta}, we see that for small enough $\delta$,
\[ - \xi \cdot \nb^2 y_2 (x) \cdot \xi \geqslant \frac{\gamma}{2} |\xi|^2, \quad \xi \in \R^n,\]
on $\{ d_\Gamma > 2 d_0 \}$. This shows that $(y_1,y_2)$ forms a boundary defining pair.
\end{proof}

\subsection{Some Estimates}

We now list a few estimates we require to prove our main results.

First, we recall the following Hardy inequality for $d_\Gamma$-weights, which plays an important role in the well-posedness theory of \eqref{eq_intro_wave}, and whose proof can be found in \cite{MR1655516}:\footnote{\cite{MR1655516} also establishes, in convex settings, the optimal constant for the Hardy inequality.}

\begin{proposition} \label{lemma_hardy_para}
For any $u \in H^1_0 (\Omega)$ the following holds
\[ \int_\Omega d_\Gamma^{-2} u^2 \lesssim \int_\Omega | \nb_x u |^2. \]
\end{proposition}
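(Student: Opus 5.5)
The plan is to prove the one-dimensional Hardy inequality along normal lines near $\Gamma$ and patch it with the trivial interior bound. By density, it suffices to take $u \in C^\infty_c(\Omega)$.

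\textbf{Interior region.} Fix $\delta > 0$ small enough that the collar $\Omega_\delta := \{ x \in \Omega \mid d_\Gamma (x) < \delta \}$ is diffeomorphic to $\Gamma \times (0,\delta)$ via $(\omega, s) \mapsto \omega + s \nu(\omega)$, where $\nu$ is the inward unit normal. This uses that $\Gamma$ is $C^4$ and compact, so there is a positive reach. On $\Omega \setminus \Omega_\delta$ we have $d_\Gamma^{-2} \leq \delta^{-2}$. Hence that part of the integral is bounded by $\delta^{-2} \int_\Omega u^2$, which in turn is $\lesssim \int_\Omega |\nabla u|^2$ by the Poincaré inequality on the bounded domain $\Omega$.

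\textbf{Collar region.} In the coordinates $(\omega, s)$ we have $d_\Gamma = s$, and the volume element is $J(\omega,s)\, ds\, d\omega$ with $J$ continuous and $c \leq J \leq C$ on $\Gamma \times [0,\delta]$. Let $\chi$ be a smooth cutoff equal to $1$ for $s \leq \delta/2$ and $0$ for $s \geq \delta$, and set $v := \chi u$. For fixed $\omega$, the function $s \mapsto v(\omega,s)$ vanishes near $s=0$ and for $s \geq \delta$. The classical one-dimensional Hardy inequality gives
\[
\int_0^\delta s^{-2} v^2 \, ds \leq 4 \int_0^\delta (\partial_s v)^2 \, ds .
\]
To prove it, write $\int s^{-2} v^2 = \int (-s^{-1})' v^2 = 2 \int s^{-1} v \, \partial_s v$ and apply Cauchy--Schwarz; the boundary terms vanish by the support properties of $v$. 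Since $J$ is bounded above and below, integrating over $\omega$ yields
\[
\int_{\Omega_\delta} d_\Gamma^{-2} v^2 \lesssim \int_\Omega |\nabla v|^2 \lesssim \int_\Omega \left( |\nabla u|^2 + u^2 \right) ,
\]
using $|\partial_s v| \leq |\nabla v|$ and $|\nabla \chi| \lesssim \delta^{-1}$. The remaining piece is $(1-\chi)u$, which is supported where $d_\Gamma \geq \delta/2$ and so is handled as in the interior region. A final application of Poincaré absorbs the $u^2$ term.

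\textbf{Main obstacle.} The only delicate point is the geometric setup of the collar: a tubular neighbourhood with $d_\Gamma$ smooth there and $J$ uniformly bounded above and below. This follows from the $C^2$ (here $C^4$) regularity and compactness of $\Gamma$, via the standard Gilbarg--Trudinger lemma on the distance function. The optimal constant is not needed, so this argument suffices and we do not need to use \cite{MR1655516}.
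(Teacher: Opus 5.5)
Your proof is correct. It takes a different route from the paper only in the sense that the paper gives no argument of its own: it defers entirely to \cite{MR1655516}, which, as the paper's footnote notes, also yields the optimal constant in the convex case, i.e.\ $\int_\Omega d_\Gamma^{-2} u^2 \leq 4 \int_\Omega |\nabla u|^2$.

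You instead give the standard self-contained argument. You apply the one-dimensional Hardy inequality along the normal rays of a tubular collar, where $d_\Gamma = s$ and the Jacobian is pinched between positive constants. You then split $u = \chi u + (1-\chi) u$ and use Poincar\'e's inequality to absorb both the contribution from $\{ d_\Gamma \geq \delta/2 \}$ and the $|\nabla \chi|^2 u^2$ error.

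Your route needs only boundedness of $\Omega$ and $C^2$ regularity of $\Gamma$, with no convexity. The price is a non-explicit constant depending on the collar width, the curvature of $\Gamma$ and the Poincar\'e constant. The citation buys sharpness, but the statement is only up to constants, and so is every use of it in the paper (energy equivalences, and controlling $d_\Gamma^{-2} u^2$ by the energy in the proof of Theorem~\ref{thm_main}). So nothing is lost.

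Two minor points. First, your separate interior paragraph is already subsumed by your treatment of $(1-\chi) u$. Second, the density step from $C^\infty_c (\Omega)$ to $H^1_0 (\Omega)$ should be justified, e.g.\ by Fatou's lemma along an a.e.\ convergent subsequence. This is needed because the left-hand side is not a priori continuous in the $H^1$ norm.
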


For our Carleman estimate, we will also need the following pointwise Hardy inequality:

\begin{lemma} \label{lemma_hardy}
For any $q \in \R$, $y \in C^2 (\Omega)$, and $v \in C^1(\Omega)$, the following inequality holds:
\begin{align*}
y^{q-1} ( D_y v)^2 & \geqslant \frac{(2-q)^2}{4} y^{q-3} |\nb y|^4 v^2 - \frac{(2-q)}{2} y^{q-2}(\Delta y |\nb y|^2 + 2 \nb y \nb^2 y \nb y) v^2 \\
& \qquad + \nb \cdot \left(\frac{(2-q)}{2} y^{q-2} |\nb y|^2 \nb y v^2 \right). \nt \label{eq_hardy_anew}
\end{align*}
\end{lemma}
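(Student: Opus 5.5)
The plan is the standard completing-the-square argument behind pointwise Hardy inequalities. Set $c := \frac{2-q}{2}$ and consider the pointwise nonnegative quantity
\[
0 \leqslant y^{q-1} \left( D_y v + c\, y^{-1} |\nb y|^2 v \right)^2 \text{.}
\]
Expanding the square gives
\[
y^{q-1} ( D_y v )^2 \geqslant - 2 c\, y^{q-2} |\nb y|^2 v\, D_y v - c^2 y^{q-3} |\nb y|^4 v^2 \text{.}
\]
The remaining work is to rewrite the cross term, with the choice of sign inside the square fixed so that the signs in \eqref{eq_hardy_anew} come out correctly.

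To handle the cross term, write $2 v D_y v = \nb y \cdot \nb ( v^2 )$. Then
\[
- c\, y^{q-2} |\nb y|^2 \nb y \cdot \nb (v^2) = - \nb \cdot \left( c\, y^{q-2} |\nb y|^2 \nb y\, v^2 \right) + c\, v^2\, \nb \cdot \left( y^{q-2} |\nb y|^2 \nb y \right) \text{.}
\]
Computing the divergence of the vector field gives
\[
\nb \cdot ( y^{q-2} |\nb y|^2 \nb y ) = (q-2) y^{q-3} |\nb y|^4 + y^{q-2} \left( 2 \nb y \nb^2 y \nb y + |\nb y|^2 \Delta y \right) \text{.}
\]

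Combining these, the $v^2$ coefficient of the $y^{q-3}|\nb y|^4$ term is $c(q-2) - c^2 = -2c^2 - c^2 = -3c^2$. This is not $+c^2$, so this sign choice is wrong. I would therefore redo the argument with the opposite sign inside the square:
\[
0 \leqslant y^{q-1} \left( D_y v - c\, y^{-1} |\nb y|^2 v \right)^2 \text{.}
\]
Expanding now gives $y^{q-1} (D_y v)^2 \geqslant c\, y^{q-2} |\nb y|^2 \nb y \cdot \nb (v^2) - c^2 y^{q-3} |\nb y|^4 v^2$. Integrating by parts as above, the divergence enters with sign $+\nb\cdot( c\, y^{q-2}|\nb y|^2 \nb y\, v^2 )$, matching \eqref{eq_hardy_anew}. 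The leftover term is
\[
- c\, v^2 \left[ (q-2) y^{q-3} |\nb y|^4 + y^{q-2} \left( \Delta y |\nb y|^2 + 2 \nb y \nb^2 y \nb y \right) \right] \text{.}
\]
Since $-c(q-2) = 2c^2$, the $y^{q-3}|\nb y|^4 v^2$ coefficient becomes $2c^2 - c^2 = c^2 = \frac{(2-q)^2}{4}$. The second-order terms appear with coefficient $-\frac{2-q}{2}$, exactly as stated.

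The only real obstacle is this sign bookkeeping: choosing the sign in the square so that the divergence term and the $y^{q-3}$ term both come out with the signs in \eqref{eq_hardy_anew}. Everything else is a routine product-rule computation. The hypotheses $y \in C^2$ and $v \in C^1$ suffice, since the vector field $y^{q-2} |\nb y|^2 \nb y\, v^2$ is $C^1$ wherever $y > 0$.
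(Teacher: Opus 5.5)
Your corrected argument (the second sign choice) is right and is essentially the paper's proof. The paper expands $0 \leqslant ( y^{\frac{q-1}{2}} D_y v + b\, y^{\frac{q-3}{2}} |\nabla y|^2 v )^2$ with $b = \frac{q-2}{2} = -c$, which is exactly your $y^{q-1}( D_y v - c\, y^{-1} |\nabla y|^2 v )^2$, and then rewrites the cross term via the same product-rule identity for $\nabla \cdot ( y^{q-2} |\nabla y|^2 \nabla y\, v^2 )$, implicitly on $y > 0$ as you note.
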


\begin{proof}
Let $b\in \R$ and note that
\begin{equation} \label{eq_carl_anew_-1}
0 \leqslant ( y^{\frac{q-1}{2}} D_y v + b y^{\frac{q-3}{2}} |\nb y|^2 v )^2 = y^{q-1} (D_y v)^2 + b^2 y^{q-3} |\nb y|^4 v^2 + 2b y^{q-2} |\nb y |^2 v D_y v.
\end{equation}
Now consider the following
\begin{align*}
\nb \cdot(b y^{q-2} |\nb y |^2 \nb y v^2) & = \nb (b y^{q-2} |\nb y |^2 v^2) \cdot \nb y + b y^{q-2} |\nb y |^2 v^2 \Delta y\\
& = b (q-2) y^{q-3} |\nb y |^4 v^2 + 2 b y^{q-2} v^2 ( \nb y \nb^2 y \nb y) \\
& \qquad + 2b y^{q-2} |\nb y |^2 v D_y v + b y^{q-2} |\nb y |^2 v^2 \Delta y.
\end{align*}
Using the above equation to replace the third term in the right hand side of \eqref{eq_carl_anew_-1}, we get
\begin{align*}
y^{q-1} (D_y v)^2 & \geqslant - \nb \cdot(b y^{q-2} |\nb y |^2 \nb y v^2) + b (q-2) y^{q-3} |\nb y |^4 v^2 + 2 b y^{q-2} v^2 ( \nb y \nb^2 y \nb y)\\
& \qquad - b^2 y^{q-3} |\nb y |^4 v^2 + b y^{q-2} |\nb y |^2 v^2 \Delta y.
\end{align*}
Finally, substituting $b=\frac{q-2}{2}$ in the above estimate completes the proof of the lemma.
\end{proof}

Finally, we recall the following standard energy estimate for solutions of \eqref{eq_intro_wave}:

\begin{lemma} %\label{thm_energy_est}
Assume the setting and wave equation from Assumption \ref{ass.setting}.
Then, for any Dirichlet solution $u$ of \eqref{eq_intro_wave} on $( -T, T ) \times \Omega$ that is boundary admissible with respect to $\Box_\sigma$, we have the following energy estimate,
\begin{equation} \label{eq_energy_est}
E[u] (s) \leqslant  e^{ M |t-s| } E[u] (t) \text{,} \qquad s,t \in (-T,T) \text{,}
\end{equation}
where $M$ depends on $n$, $q$, $X$, and $V$.
\end{lemma}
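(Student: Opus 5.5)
We prove \eqref{eq_energy_est} by the standard energy method, using a \emph{modified} energy that is adapted to the singular potential. Throughout, $u$ is a Dirichlet solution that is boundary admissible.

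\textbf{Step 1: the modified energy.} Write the equation as $P_{\sigma,y} u = -X\cdot\bar\nabla u - V_y u$, with $y$ a CBDF. Multiply by $\p_t u$ and integrate over $\Omega$ at fixed time. This suggests the energy
\[
\mathcal{E}(\tau) := \frac12\int_\Omega \Big[ (\p_t u)^2 + |\nabla u|^2 - \frac{\sigma}{y^2} u^2 \Big](\tau) \, \rd x ,
\]
whose time derivative formally satisfies
\[
\mathcal{E}'(\tau) = -\int_\Omega (X\cdot\bar\nabla u + V_y u)\,\p_t u + \text{(boundary term)}.
\]
The boundary term comes from integrating $\Delta u\,\p_t u$ by parts. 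I will in fact work on $\{y>\epsilon\}$ and let $\epsilon\to0$, so the boundary term is $\int_{\{y=\epsilon\}} \p_\nu u\,\p_t u$. Since $\sigma<0$, the potential term in $\mathcal{E}$ is nonnegative, and $\mathcal{E}$ dominates $\frac12\int [(\p_t u)^2 + |\nabla u|^2]$.

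\textbf{Step 2: equivalence with $E[u]$.} I will show that $\mathcal{E}(\tau)$ and $E[u](\tau)$ are comparable. The Hardy inequality of Proposition \ref{lemma_hardy_para} gives $\int y^{-2}u^2 \lesssim \int|\nabla u|^2$; this uses that $y\simeq d_\Gamma$ and that $u(\tau)\in H^1_0$, which follows from $\mc{D}_q u=0$ with $q<0$. The Poincaré inequality then controls $\int u^2$. Hence $\mathcal{E} \simeq E[u]$ with constants depending on $n$, $q$, $\Omega$.

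\textbf{Step 3: vanishing of the boundary term (main obstacle).} This is where boundary admissibility is used. Near $\Gamma$ we have $y=d_\Gamma$, and
\[
\p_\nu u = -D_y u = -D_y\big(y^{\frac q2}\cdot y^{-\frac q2}u\big),
\]
which expands into $-\tfrac q2\, y^{\frac q2-1}u - y^{\frac q2}D_y(y^{-\frac q2}u)$. Definition \ref{def_bdry_ads} gives the following asymptotics:
\begin{itemize}
\item $y^{-1+q}u \sim y^{q/2}$;
\item $y^q D_y(y^{-q/2}u)\to -\mc N_q u$;
\item $y^{-q/2}\, y^q\p_t u\to0$.
\end{itemize}
From these, $y^{\frac q2-1}u = O(y^{-\frac q2})$ and $y^{\frac q2}D_y(y^{-\frac q2}u)=O(y^{-\frac q2})$, so $\p_\nu u = O(y^{-q/2})$. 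On the other hand, $\p_t u = o(y^{q/2})$. The product is therefore $o(1)$ uniformly on $\{y=\epsilon\}$, and the surface integral tends to $0$ as $\epsilon\to0$.

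Two points need care here. First, the limits must be uniform enough on $\Gamma$ to pass to the limit; I will assume they are, or else integrate in $t$ first and use dominated convergence. Second, the interior integrals over $\{y>\epsilon\}$ must converge as $\epsilon\to0$, which follows from finite energy.

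\textbf{Step 4: Gronwall.} We have $|X\cdot\bar\nabla u\,\p_t u|\lesssim \|X\|_\infty(|\bar\nabla u|^2)$. For the potential term,
\[
|V_y u\,\p_t u| \lesssim \|d_\Gamma V_y\|_\infty\,\big(d_\Gamma^{-2}u^2 + (\p_t u)^2\big),
\]
and the $d_\Gamma^{-2}u^2$ part is controlled by Hardy. Together these give $|\mathcal{E}'|\le M'\mathcal{E}$. Gronwall's inequality then yields $\mathcal{E}(s)\le e^{M'|t-s|}\mathcal{E}(t)$ in both time directions. Combining with Step 2 gives \eqref{eq_energy_est}; the equivalence constants are absorbed either into $M$ or as a multiplicative prefactor.
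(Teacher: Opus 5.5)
\textbf{The gap is the last step of Step 4.} Your strategy is the standard energy argument: multiply by $\p_t u$, integrate over $\{y>\epsilon\}$, kill the flux through $\{y=\epsilon\}$ using admissibility, control $V_y u\,\p_t u$ by Hardy, then apply Gronwall. The paper gives no argument of its own and defers to \cite[Lemma 5.3]{MR4275478}, where the energy is the twisted one mentioned after \eqref{eq_energy_defn}; that energy differs from your $\mathcal{E}$ only by Hardy-controlled terms.

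The problem is your claim that the constants can be absorbed into $M$. A constant $C>1$ coming from $\mathcal{E}\simeq E[u]$ cannot be absorbed into $M$, because $C e^{M'|t-s|}\leqslant e^{M|t-s|}$ fails as $|t-s|\to 0$. What your argument actually proves is $\mathcal{E}(s)\leqslant e^{M'|t-s|}\mathcal{E}(t)$, and hence only $E[u](s)\leqslant C e^{M'|t-s|}E[u](t)$.

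This cannot be repaired for the untwisted $E[u]$. The prefactor-free inequality for all $s,t$ would force $|\frac{d}{d\tau}E[u]|\leqslant M E[u]$. But $\frac{d}{d\tau}E[u]$ contains the term $2\sigma\int_\Omega d_\Gamma^{-2}u\,\p_t u$, which $E[u]$ does not control. Take $X=0$, $V=0$ and Cauchy data $u=\epsilon^{1/2}w(d_\Gamma/\epsilon)$, $\p_t u=\pm\epsilon^{-1/2}w(d_\Gamma/\epsilon)$ with $0\neq w\in C^\infty_c((1,2))$. Then $E[u]\simeq 1$, while that term is $\simeq\epsilon^{-1}$. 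By finite speed of propagation the solution stays compactly supported for short times, so no boundary terms interfere.

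So the estimate for the standard energy must carry a prefactor, or be stated for $\mathcal{E}$ or the twisted energy. This is harmless for Theorem \ref{thm_main}, which only uses $E[u](t)\simeq_T E[u](0)$. You should state that weaker conclusion rather than claim absorption into $M$.

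\textbf{Two smaller slips, which do not affect the outcome.}

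In Step 3, the correct expansion is $D_y(y^{q/2}\cdot y^{-q/2}u)=\frac q2 y^{-1}u+y^{q/2}D_y(y^{-q/2}u)$; the first term has $y^{-1}$, not $y^{\frac q2-1}$. Also, admissibility gives $y^{\frac q2-1}u\to-\mc{N}_q u/(1-q)$, i.e.\ $u=O(y^{1-\frac q2})$. So $y^{\frac q2-1}u$ is $O(1)$, not $O(y^{-q/2})$. The two errors cancel: $y^{-1}u=O(y^{-q/2})$, so $\p_\nu u=O(y^{-q/2})$ is right. Moreover admissibility actually gives $\p_t u=o(y^{-q/2})$, stronger than what you used.

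In Step 2, $\mc{D}_q u=0$ with $q<0$ does not give $u(\tau)\in H^1_0(\Omega)$. Since $d_\Gamma^{-q/2}\to 0$, every bounded $u$ satisfies $\mc{D}_q u=0$. The vanishing of $u$ on $\Gamma$ comes instead from the admissibility bound $u=O(d_\Gamma^{1-q/2})$.
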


\begin{proof}
See \cite[Lemma 5.3]{MR4275478}.
\end{proof}

\section{Carleman Estimates} \label{sec_CE}

The aim of this section is to prove the global Carleman estimate, Theorem \ref{thm_carl_est_main}.

\subsection{Pointwise estimate}

As usual, the first step toward our global Carleman estimate is to establish a pointwise version of the estimate, with the eventual boundary terms represented as divergences.
The key pointwise estimate is as follows:

\begin{theorem} \label{thm_carl_anew}
Suppose that Assumption \ref{assump_h0} holds, such that $x_*$ is the critical point of the boundary defining function $y$. Fix $T>0$. Let $q, \sigma \in \R$
such that 
\begin{equation} \label{eq_carl_anew_0a}
-1 < q < 0, \qquad - \frac{q}{2}(2-q) + 2 \sigma = 0.
\end{equation}
Then there exist constants $C,C',C'', \lambda_0, \mu_0 > 0$ such that for any $\lambda \geqslant \lambda_0, \mu \geqslant \mu_0$ and any $u \in C^2([-T,T] \times \Omega)$ the following estimate holds on $[-T,T] \times \Omega$
\begin{align*}
\frac{1}{2} e^{-2 \lambda f} |P_{\sigma,y} u|^2 & \geqslant 2 \lambda \nb^\alpha J_\alpha + C \lambda e^h e^{-2\lambda f} \left[ \frac{\mu \gamma}{4} y^q |\nabla_x u|^2 + (\p_t u)^2 \right] \\
& \quad + C' \Big[ \mu^4 \lambda^3 e^{3h} y^{4q} + \mu^3 \lambda^3 e^{3 h} y^{3q-1} + \mu^2 \lambda e^h y^{2q-2} \Big] \mathbf{1}_{\Omega \setminus \mathbb{B}_\delta(x_*)} e^{-2\lambda f} u^2\\
& \quad - C'' \Big[ \mu^3 \lambda^3 e^{3h} y^{3q} + \mu^2 \lambda e^h y^{2q-1} + \mu \lambda e^h y^{q-2} \Big] \mathbf{1}_{\mathbb{B}_\delta(x_*)} e^{-2\lambda f} u^2, \nt \label{eq_carl_anew_thm}
\end{align*}
where 
\begin{equation} \label{eq_carl_anew_0b}
f(t,x) := a - \exp h(t,x), \quad h(t,x) := \frac{\mu}{1+q} [b^{1+q} - (y(x))^{1+q}] - ct^2 + \beta,
\end{equation}
for arbitrary constants $a>0$ and $\beta > 0$, $b = \displaystyle \sup_\Omega y$, and $c$ is chosen such that
\begin{equation} \label{eq_c_choice}
c \leqslant \frac{1}{T}, \quad c < 1,
\end{equation}
and where $J := J^0 + J^1$ with the vector fields $J^0, J^1$ given by
\begin{align*} \nt \label{eq_carl_anew_JJH}
J^1_\alpha & := \nb^\beta f \nb_\alpha (e^{-\lambda f} u) \nb_\beta (e^{-\lambda f} u) + \frac{1}{2} Q_0 \nb_\alpha (e^{-\lambda f} u)^2 - \frac{1}{2} \nb_\alpha f \nb^\beta (e^{-\lambda f} u) \nb_\beta (e^{-\lambda f} u) \\
& \qquad - \frac{1}{2} \nb_\alpha Q_0 (e^{-\lambda f} u)^2 + \frac{1}{2}\nb_\alpha f A_0 (e^{-\lambda f} u)^2,\\
J^0_\alpha & := \left\{ \frac{5\mu^2}{16} (1-2q) y^{2q-1} + \frac{\mu}{2} (-q)(2-q) y^{q-2} \right\} e^h |\nb y|^2 \nb_\alpha y (e^{-\lambda f} u)^2,
\end{align*}
where $Q_0$ and $A_0$ are given by
\begin{equation} \label{eq_carl_anew_QA}
\begin{split}
Q_0 & := \frac{1}{2} \square f + 32 e^h, \\
A_0 & := \lambda^2 \nb^\alpha f \nb_\alpha f + \sigma y^{-2}.
\end{split}
\end{equation}
\end{theorem}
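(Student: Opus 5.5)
The plan is the standard conjugate-multiply-integrate-by-parts derivation, done pointwise. Set $v := e^{-\lambda f} u$ and write the conjugated operator
\[
e^{-\lambda f} P_{\sigma,y} u = \Box v + 2\lambda \nabla^\alpha f \nabla_\alpha v + \lambda (\Box f) v + \lambda^2 \nabla^\alpha f \nabla_\alpha f\, v + \sigma y^{-2} v
= \mathcal{A} v + \mathcal{S} v,
\]
where $\mathcal{A} v := \Box v + A_0 v$ is the symmetric part and $\mathcal{S} v := 2\lambda \nabla^\alpha f \nabla_\alpha v + 2\lambda Q_0 v$ is the antisymmetric part. The $32 e^h$ inside $Q_0$ is compensated by a correction placed in the zero-order part. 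We then use
\[
\tfrac12 |\mathcal{A} v + \mathcal{S} v|^2 \geq \mathcal{A} v \cdot \mathcal{S} v - (\text{error})^2
\]
and expand $\mathcal{A} v \cdot \mathcal{S} v$ by the usual Leibniz computations. The terms with $\Box v \cdot \nabla^\alpha f \nabla_\alpha v$ produce $\nabla^\alpha$ of the first and third pieces of $J^1$, together with the bulk term $2\lambda\, \nabla^\alpha \nabla^\beta f\, \nabla_\alpha v \nabla_\beta v$ and a $-\lambda \Box f\, |\bar\nabla v|^2$ piece. The $\Box v\, Q_0 v$ terms produce the $Q_0$-pieces of $J^1$ plus $-Q_0 |\bar\nabla v|^2 + \tfrac12 \Box Q_0\, v^2$. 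The $A_0 v\, \nabla f \cdot \nabla v$ term gives the last piece of $J^1$ and $-\tfrac12 \nabla^\alpha(A_0 \nabla_\alpha f)\, v^2$. After these expansions, everything left is a bulk quadratic form in $(\bar\nabla v, v)$ plus $2\lambda \nabla^\alpha J^1_\alpha$.

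Next I would compute the derivatives of $f = a - e^h$. We have $\nabla f = -e^h \nabla h$ with $\nabla_x h = -\mu y^q \nabla y$ and $\partial_t h = -2ct$, so
\[
\nabla^2 f = -e^h (\nabla^2 h + \nabla h \otimes \nabla h), \qquad \nabla_x^2 h = -\mu y^q \nabla^2 y - \mu q y^{q-1} \nabla y \otimes \nabla y .
\]
The first-order bulk form $\lambda(2\nabla^2 f - \Box f\,\eta - 2Q_0/\lambda\cdot\eta)(\bar\nabla v,\bar\nabla v)$ is then bounded below in three directions.
\begin{itemize}
\item Tangential directions: these are controlled by $\mu e^h y^q\gamma|\slashed\nabla v|^2$ via \eqref{eq_angconcave}.
\item The $D_y$ direction: this gains $\mu^2 e^h y^{2q}|\nabla y|^2 (D_y v)^2$ from $\nabla h\otimes\nabla h$ together with $-\mu q\, y^{q-1}$, which is positive since $q<0$.
\item The time direction: this is controlled by $c<1$, the $32e^h$ term, and the $-ct^2$ part; the $t$-derivative cross terms are handled with $c\le 1/T$.
\end{itemize}
Near $x_*$ we instead use \eqref{eq_hess_y_x*} directly. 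Combining these and absorbing cross terms for $\mu$ large gives the $C\lambda e^h[\frac{\mu\gamma}{4}y^q|\nabla u|^2+(\partial_t u)^2]$ term. To return from $v$ to $u$ we use $\nabla v = e^{-\lambda f}(\nabla u - \lambda u\nabla f)$; the resulting $\lambda^2$ zero-order losses are paid for by the $\lambda^3$ gain below.

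For the zero-order coefficient, the leading part is $-\tfrac12 \nabla^\alpha(A_0 \nabla_\alpha f)$ minus the $Q_0$ contribution. This yields the $\lambda^3$ terms $\lambda^3 e^{3h}(\mu^4 y^{4q}|\nabla y|^4 + \mu^3 y^{3q-1}|\nabla y|^4\cdot(-q) + \dots)$, which are positive wherever $|\nabla y|\gtrsim 1$, i.e. on $\Omega\setminus\mathbb{B}_\delta(x_*)$. Inside $\mathbb{B}_\delta(x_*)$ they are bounded below by $-C''\mu^3\lambda^3 e^{3h}y^{3q}$, which produces the two indicator terms. The singular $\sigma y^{-2}$ pieces are the delicate part, since they generate terms like $\lambda\mu e^h y^{q-3}|\nabla y|^2 v^2$ of indeterminate sign. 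Here I would apply Lemma \ref{lemma_hardy} to the $D_y$-positivity, spending the surplus $\mu e^h(-q)y^{q-1}(D_yv)^2$ and $\mu^2 y^{2q}(D_yv)^2$ after weighting by $e^h$. The constraint \eqref{eq_carl_anew_0a}, $2\sigma = \frac q2(2-q)$, is exactly what makes the $y^{q-3}$ coefficients cancel against $\frac{(2-q)^2}{4}$, leaving lower-order positive $\mu^2\lambda e^h y^{2q-2}$ remainders. The divergence terms from the Hardy inequality, with their weights, become $J^0$; the $e^h$ factor inside the divergence creates extra $\mu y^q|\nabla y|^2$ terms, whose sign is again fixed by $q<0$.

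I expect the main obstacle to be this last step: tracking the $\sigma y^{-2}$ contributions and the Hardy-inequality application with the precise constants $\frac{5}{16}(1-2q)$ and $\frac12(-q)(2-q)$ so that the singular $y^{q-3}$ and $y^{2q-2}$ terms cancel exactly rather than merely up to sign. I would first verify this cancellation in the region $y=d_\Gamma$, where $|\nabla y|=1$, and then treat the interior, where all weights are bounded and $\mu,\lambda$ large dominates.
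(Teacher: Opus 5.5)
Your proposal follows the paper's proof essentially step for step: the same conjugation $v=e^{-\lambda f}u$ with multiplier $Sv=2\lambda\nabla^\alpha f\nabla_\alpha v+2\lambda Q_0 v$, where the $32e^h$ in $Q_0$ buys $(\p_t v)^2$ positivity at the price of a $-32e^h|\nabla_x v|^2$ loss that large $\mu$ absorbs. The rest also matches: the tangential/normal splitting away from $x_*$ versus full convexity near $x_*$, Lemma \ref{lemma_hardy} applied with exponents $q$ and $2q+1$ to produce $J^0$, the exact $y^{q-3}$ cancellation from $2\sigma=\frac q2(2-q)$, and undoing the conjugation with the $\lambda^2$ losses absorbed by the $\lambda^3$ gain.

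Two bookkeeping points to fix when you write it out.
First, the first-order bulk form is $2\lambda\bigl[-\nabla^{\alpha\beta}f+\frac12(\Box f-2Q_0)\eta^{\alpha\beta}\bigr]\nabla_\alpha v\nabla_\beta v$ with $\Box f-2Q_0=-64e^h$. You wrote the $f$-terms with the opposite sign, although your positivity claims use the correct one.
Second, the $y^{q-3}$ cancellation also needs the $\frac12\mu q(q-1)(q-2)y^{q-3}|\nabla y|^4$ term (times $e^h$) coming from $\Box Q_0$. Cancelling only the $\sigma$-term against the Hardy constant $\frac{(2-q)^2}{4}$ does not close.
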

\begin{proof}
Since the proof is long and technical, we provide an outline here. In Step A, we consider a conjugated function $v$ and derive a preliminary pointwise Carleman inequality. In Step B, we estimate the first order terms in the pointwise inequality. In Step C, we estimate the zeroth order terms. In Step D, we reverse the conjugation to obtain a pointwise inequality for $u$.

\noindent \textbf{Step A}: Let $v:= e^{-\lambda f} u$, for $\lambda$ a large enough constant to be chosen later. Then, we have
\begin{align*}
e^{-\lambda f} P_{\sigma,y} u & = e^{-\lambda f} \square u + e^{-\lambda f} \frac{\sigma}{y^2} u\\
& = e^{-\lambda f} \square(e^{\lambda f} v) + e^{-\lambda f} \sigma y^{-2} e^{\lambda f} v\\
& = e^{-\lambda f} \nb^\alpha \nb_\alpha (e^{\lambda f} v) + \sigma y^{-2} v\\
& = e^{-\lambda f} \nb^\alpha(\lambda e^{\lambda f} \nb_\alpha f v + e^{\lambda f} \nb_\alpha v ) + \sigma y^{-2} v\\
& = e^{-\lambda f} (\lambda^2 e^{\lambda f} \nb^\alpha f \nb_\alpha f v + \lambda e^{\lambda f} \square f v + \lambda e^{\lambda f} \nb_\alpha f \nb^\alpha v + \lambda e^{\lambda f} \nb^\alpha f \nb_\alpha v + e^{\lambda f} \square v)\\
& \quad + \sigma y^{-2} v\\
& = \lambda^2 \nb^\alpha f \nb_\alpha f v + \lambda \square f v + 2 \lambda \nb_\alpha f \nb^\alpha v + \square v + \sigma y^{-2} v\\
& = Sv + \square v + A_0 v + \lambda \mc{E}_0 v, \nt \label{eq_carl_anew_1a}
\end{align*}
where
\begin{align*}
Sv & := 2 \lambda \nb^\alpha f \nb_\alpha v + 2 \lambda Q_0 v, \\
\mc{E}_0 & := \square f - 2Q_0,
\end{align*}
and $Q_0$ and $A_0$ are given by \eqref{eq_carl_anew_QA}.
Next, note that
\begin{align*}
e^{-\lambda f} P_{\sigma,y} u \cdot Sv \leqslant \frac{1}{2} e^{-2\lambda f} |P_{\sigma,y} u|^2 + \frac{1}{2} |Sv|^2.
\end{align*}
Multiplying $Sv$ throughout \eqref{eq_carl_anew_1a} and then using the above estimate, we get
\begin{align*} 
\frac{1}{2} e^{-2\lambda f} |P_{\sigma,y} u|^2 & \geqslant \frac{1}{2} |Sv|^2 + \square v Sv + A_0 v Sv + \lambda \mc{E}_0 v Sv\\
& \geqslant \square v Sv + A_0 v Sv + \frac{1}{4} |Sv|^2 - \lambda^2 \mc{E}_0^2 v^2, \nt \label{eq_carl_anew_1b}
\end{align*}
where we used Young's inequality to get the last step.
Now we will compute the terms on the right hand side of the above estimate individually. We have the following
\begin{align*}
\square v Sv & = 2 \lambda \nb^\alpha \nb_\alpha v (\nb^\beta f \nb_\beta v + Q_0 v)\\
& = 2 \lambda \bigg\{ \nb^\alpha \left[\nb^\beta f \nb_\alpha v \nb_\beta v + \frac{1}{2} Q_0 \nb_\alpha v^2 \right] - \nb^{\alpha \beta} f \nb_\alpha v \nb_\beta v - \nb^\beta f \nb_\alpha v \nb_\beta^\alpha v \\
& \qquad - \frac{1}{2} \nb^\alpha Q_0 \nb_\alpha v^2 - Q_0 \nb^\alpha v \nb_\alpha v \bigg\}\\
& = 2 \lambda \bigg\{ \nb^\alpha \left[\nb^\beta f \nb_\alpha v \nb_\beta v + \frac{1}{2} Q_0 \nb_\alpha v^2 - \frac{1}{2} \nb_\alpha f \nb^\beta v \nb_\beta v - \frac{1}{2} \nb_\alpha Q_0 v^2 \right] \\
& \qquad \quad + \left[ - \nb^{\alpha \beta} f + \frac{1}{2} (\square f - 2 Q_0) \eta^{\alpha\beta} \right] \nb_\alpha v \nb_\beta v + \frac{1}{2} \square Q_0 v^2 \bigg\},\\
& = 2 \lambda \bigg\{ \nb^\alpha \left[\nb^\beta f \nb_\alpha v \nb_\beta v + \frac{1}{2} Q_0 \nb_\alpha v^2 - \frac{1}{2} \nb_\alpha f \nb^\beta v \nb_\beta v - \frac{1}{2} \nb_\alpha Q_0 v^2 \right] \\
& \qquad \quad + \left[ - \nb^{\alpha \beta} f + \frac{1}{2} \mc{E}_0 \eta^{\alpha\beta} \right] \nb_\alpha v \nb_\beta v + \frac{1}{2} \square Q_0 v^2 \bigg\}, \nt \label{eq_carl_anew_1b1}\\
A_0 v Sv & = A_0 v (2 \lambda \nb^\alpha f \nb_\alpha v + 2 \lambda Q_0 v)\\
& = \lambda \nb^\alpha (\nb_\alpha f A_0 v^2) - \lambda (\nb_\alpha f \nb^\alpha A_0 + \square f A_0 ) v^2 + 2 \lambda A_0 Q_0 v^2\\
& = \lambda \nb^\alpha (\nb_\alpha f A_0 v^2) - \lambda (\nb_\alpha f \nb^\alpha A_0 + \mc{E}_0 A_0) v^2. \nt \label{eq_carl_anew_1b2}
\end{align*}
Substituting \eqref{eq_carl_anew_1b1} and \eqref{eq_carl_anew_1b2} back into \eqref{eq_carl_anew_1b}, gives us
\begin{equation} \label{eq_carl_anew_2}
\frac{1}{2} e^{-2\lambda f} |P_{\sigma,y} u|^2 \geqslant 2 \lambda \nb^\alpha J^1_\alpha + 2 \lambda \left[ -\nb^{\alpha\beta} f + \frac{1}{2} \mc{E}_0 \eta^{\alpha\beta} \right] \nb_\alpha v \nb_\beta v + \frac{1}{4} |Sv|^2 + Av^2 - \lambda^2 \mc{E}_0^2 v^2,
\end{equation}
where $J^1_\alpha$ is given by \eqref{eq_carl_anew_JJH} and $A$ is defined as follows
\begin{equation} \label{eq_carl_anew_2a}
A := \lambda (\square Q_0 - \nb_\alpha f \nb^\alpha A_0 - \mc{E}_0 A_0). 
\end{equation}

Now, we establish some elementary identities needed in the proof. Using \eqref{eq_carl_anew_0b} we get
\begin{align*} \nt \label{eq_carl_anew_derh}
\nb_\alpha h & = -\mu y^q \nb_\alpha y - 2ct \nb_\alpha t, \\
\nb_\alpha f & = - e^h \nb_\alpha h,\\
\nb^\alpha y \nb_\alpha h & = -\mu y^q |\nb y|^2,\\
\p_t h & = -2ct,
\end{align*}
and
\begin{align*}
\nb^\alpha y \nb_\alpha f & = \mu e^h y^q |\nb y|^2, \nt \label{eq_carl_anew_2abc}\\
\nb^\alpha f \nb_\alpha f & = e^{2h} ( \mu^2 y^{2q} |\nb y|^2 - 4 c^2 t^2 ),\\
\nb^{\alpha \beta} f & = - e^h \{ \mu^2 y^{2q} \nb^\alpha y \nb^\beta y - \mu q y^{q-1} \nb^\alpha y \nb^\beta y - \mu y^q \nb^{\alpha\beta} y + 2ct \mu y^q (\nb^\alpha y \nb^\beta t + \nb^\alpha t \nb^\beta y)\\
& \qquad \qquad + (4c^2t^2 -2c) \nb^\alpha t \nb^\beta t \},\\
\square f & = - e^h \{ \mu^2 y^{2q} |\nb y|^2 - \mu q y^{q-1} |\nb y|^2 - \mu y^q \Delta y - (4c^2t^2 -2c) \},\\
Q_0 & = e^h \left\{ - \frac12 \mu^2 y^{2q} |\nb y|^2 + \frac12 \mu q y^{q-1} |\nb y|^2 + \frac12 \mu y^q \Delta y + 2c^2t^2 - c + 32 \right\}, \\
\mc{E}_0 & := \square f - 2 Q_0 = - 64 e^h.
\end{align*}

% \begin{align*}
% (i)\ & \nb^\alpha y \nb_\alpha f = \mu e^h y^q |\nb y|^2, \nt \label{eq_carl_anew_2abc}\\
% (ii)\ & \nb^\alpha f \nb_\alpha f = e^{2h} ( \mu^2 y^{2q} |\nb y|^2 - 4 c^2 t^2 ),\\
% (iii)\ & \nb^{\alpha \beta} f = - e^h \{ \mu^2 y^{2q} \nb^\alpha y \nb^\beta y - \mu q y^{q-1} \nb^\alpha y \nb^\beta y - \mu y^q \nb^{\alpha\beta} y \\
% & \qquad \qquad \qquad + 2ct \mu y^q (\nb^\alpha y \nb^\beta t + \nb^\alpha t \nb^\beta y) + (4c^2t^2 -2c) \nb^\alpha t \nb^\beta t \},\\
% (iv)\ & \square f = - e^h \{ \mu^2 y^{2q} |\nb y|^2 - \mu q y^{q-1} |\nb y|^2 - \mu y^q \Delta y - (4c^2t^2 -2c) \},\\
% (v)\ & Q_0 := e^h \left\{ - \frac12 \mu^2 y^{2q} |\nb y|^2 + \frac12 \mu q y^{q-1} |\nb y|^2 + \frac12 \mu y^q \Delta y + 2c^2t^2 - c + 2 \right\}, \\
% (vi)\ & \mc{E}_0 = \square f - 2 Q_0 = - 4 e^h.
% \end{align*}

\noindent \textbf{Step B}: Next, we estimate the first order terms present in \eqref{eq_carl_anew_2}. For this purpose, using \eqref{eq_carl_anew_2abc} we get that
\begin{align*}
\bigg[ - \nb^{\alpha \beta} f + \frac{1}{2} \mc{E}_0 \eta^{\alpha\beta} \bigg] \nb_\alpha v \nb_\beta v & = e^h \bigg[ \mu^2 y^{2q} (D_y v)^2 - \mu q y^{q-1} (D_y v)^2 - \mu y^q \nb^{\alpha \beta} y \nb_\alpha v \nb_\beta v \\
& \qquad \quad + 4ct \mu y^q D_y v \p_t v + (4c^2 t^2 - 2c) (\partial_t v)^2 - 32 \nb^\alpha v \nb_\alpha v \bigg]\\
& \geqslant e^h \bigg[ \mu^2 y^{2q} (D_y v)^2 - \mu q y^{q-1} (D_y v)^2 - \mu y^q \nb^{\alpha \beta} y \nb_\alpha v \nb_\beta v \\
+ & 4 c t \mu y^q D_y v \p_t v - 32 |\nb_x v|^2 + ( 4 c^2 t^2 -2c + 32 ) (\partial_t v)^2 \bigg]. \nt \label{eq_carl_anew_4}
\end{align*}
Now we will study the behaviour of the terms on the right hand side of the above estimate.

First, let us assume that we are away from the critical point $x_*$. Then for the spatial components we have
\begin{equation*}
\nb^a v \nb_a v = \frac{(D_y v)^2}{|\nb y|^2} + |\slashed\nb v|^2,
\end{equation*}
which implies that
\begin{equation} \label{eq_carl_anew_hessian_y}
\nb^{\alpha\beta} y \nb_\alpha v \nb_\beta v = \frac{\nb^{\alpha\beta} y \nb_\alpha y \nb_\beta y}{|\nb y|^4} (D_y v)^2 + \frac{2}{|\nb y|^2} \nb^{\alpha\beta} y \nb_\alpha y \slashed\nb_\beta v D_y v + \nb^{\alpha\beta} y \slashed\nb_\alpha v \slashed\nb_\beta v.
\end{equation}
Also note that in this region, we have
\begin{equation} \label{eq_carl_anew_yderbnd}
C_1 < |\nb y|^2 < C_2, \quad C_1,C_2>0.
\end{equation}
Considering the terms on the right hand side of \eqref{eq_carl_anew_4}, and using \eqref{eq_angconcave}, \eqref{eq_carl_anew_hessian_y}, and \eqref{eq_carl_anew_yderbnd} we get for some $C>0$
\begin{align*}
& \mu^2 y^{2q} (D_y v)^2 - \mu q y^{q-1} (D_y v)^2 - \mu y^q \nb^{\alpha \beta} y \nb_\alpha v \nb_\beta v + 4ct \mu y^q D_y v \p_t v - 32 |\nb_x v|^2\\
& \geqslant \mu^2 y^{2q} (D_y v)^2 - \mu q y^{q-1} (D_y v)^2 - \mu y^q \frac{\nb^{\alpha\beta} y \nb_\alpha y \nb_\beta y}{|\nb y|^4} (D_y v)^2 - \mu y^q \frac{2}{|\nb y|^2} \nb^{\alpha\beta} y \nb_\alpha y \slashed\nb_\beta v D_y v \\
& \quad - \mu y^q \nb^{\alpha\beta} y \slashed\nb_\alpha v \slashed\nb_\beta v - 32 c^2 t^2 (\p_t v)^2 - \frac{1}{8} \mu^2 y^{2q} (D_y v)^2 - 32 \frac{(D_y v)^2}{|\nb y|^2} - 32 |\slashed\nb v|^2\\
& \geqslant \mu^2 y^{2q} (D_y v)^2 - \mu q y^{q-1} (D_y v)^2 - \frac{C}{C_1^2} \mu y^q (D_y v)^2 - \frac{4C}{C_1^2 \gamma} \mu y^q (D_y v)^2 - \frac{\gamma}{4} \mu y^q |\slashed\nb v|^2 \\
& \quad + \mu y^q \gamma |\slashed\nb v|^2 - 32 c^2 t^2 (\p_t v)^2 - \frac{1}{8} \mu^2 y^{2q} (D_y v)^2 - \frac{32}{C_1} (D_y v)^2 - 32 |\slashed\nb v|^2\\
& \geqslant \frac{3\mu^2}{4} y^{2q} (D_y v)^2 - \mu q y^{q-1} (D_y v)^2 + \frac{\mu}{2} \gamma y^q | \slashed \nb v|^2 - 32 c^2 t^2 (\p_t v)^2, \nt \label{eq_carl_anew_Z1}
\end{align*}
for large enough $\mu$. 
Using \eqref{eq_carl_anew_Z1} in \eqref{eq_carl_anew_4} we see that the following holds
\begin{align*}
& \left[ - \nb^{\alpha \beta} f + \frac{1}{2} \mc{E}_0 \eta^{\alpha\beta} \right] \nb_\alpha v \nb_\beta v \\
& \geqslant e^h \bigg[ \left( \frac{3\mu^2}{4} y^{2q} + \mu (-q) y^{q-1} \right) (D_y v)^2 + \frac{\mu}{2} \gamma y^q | \slashed \nb v|^2 + ( 4 c^2 t^2 - 2 c + 32 - 32 c^2 t^2) (\partial_t v)^2 \bigg]\\
& \geqslant e^h \bigg[ \left( \frac{5\mu^2}{8} y^{2q} + \mu (-q) y^{q-1} \right) (D_y v)^2 + \frac{\mu \gamma}{4} y^q | \nb_x v|^2 + ( - 2 c + 32 - 28 c^2 t^2) (\partial_t v)^2 \bigg], \nt \label{eq_carl_anew_6a}
\end{align*}
when we are away from the critical point $x_*$.

Next we will discuss the behaviour of the terms on the right hand side of \eqref{eq_carl_anew_4}, near the critical point $x_*$. Note that \eqref{eq_hess_y_x*} implies that in a neighbourhood of $x_*$ we have
\[ - \nb^2 y \geqslant \frac{\gamma}{2} I_{n \times n}, \]
% Now let $N$ denote the unit normal given by 
% \[ N:= \frac{\nb y}{|\nb y|},\]
% and l
% Let $D_N$ denote the derivative along $N$. Then we have
% \begin{align*}
% - \mu y^q \nb_{NN} y (D_N v)^2 - 4c(1+y^q) (D_N v)^2 \geqslant \frac{\mu}{2} y^q (D_N v)^2.
% \end{align*}
which implies that
\begin{align*}
- \mu y^q \nb^{\alpha \beta} y \nb_\alpha v \nb_\beta v - 4 |\nb_x v|^2 & \geqslant \mu y^q \frac{\gamma}{2} |\nb_x v|^2 - 4 |\nb_x v|^2 \geqslant \frac{\mu \gamma}{4} y^q |\nb_x v|^2,
\end{align*}
for large enough $\mu$.
Thus, near $x_*$ the following holds
\begin{align*}
& \mu^2 y^{2q} (D_y v)^2 - \mu q y^{q-1} (D_y v)^2 - \mu y^q \nb^{\alpha \beta} y \nb_\alpha v \nb_\beta v + 4ct \mu y^q D_y v \p_t v - 4 |\nb_x v|^2 \\
& \geqslant \mu^2 y^{2q} (D_y v)^2 - \mu q y^{q-1} (D_y v)^2 + \frac{\mu \gamma}{4} y^q |\nb_x v|^2 - \frac{1}{8} \mu^2 y^{2q} (D_y v)^2 - 32 c^2 t^2 (\p_t v)^2 \\
& \geqslant \left( \frac{5 \mu^2}{8} y^{2q} + \mu (-q) y^{q-1} \right) (D_y v)^2 + \frac{\mu \gamma}{4} y^q | \nb_x v|^2 - 32 c^2 t^2 (\p_t v)^2.
\end{align*}

The above discussion and \eqref{eq_carl_anew_4} imply that near $x_*$ we have 
\begin{align*}
& \left[ - \nb^{\alpha \beta} f + \frac{1}{2} \mc{E}_0 \eta^{\alpha\beta} \right] \nb_\alpha v \nb_\beta v \nt \label{eq_carl_anew_6a2}\\
& \geqslant e^h \bigg[ \left( \frac{5 \mu^2}{8} y^{2q} + \mu (-q) y^{q-1} \right) (D_y v)^2 + \frac{\mu \gamma}{4} y^q | \nb_x v|^2 + ( - 2 c + 32 - 28 c^2 t^2) (\partial_t v)^2 \bigg].
\end{align*}

Combining \eqref{eq_carl_anew_6a} and \eqref{eq_carl_anew_6a2} we get the following
\begin{align*}
& \left[ - \nb^{\alpha \beta} f + \frac{1}{2} \mc{E}_0 \eta^{\alpha\beta} \right] \nb_\alpha v \nb_\beta v \nt \label{eq_carl_anew_6a3}\\
& \geqslant e^h \bigg[ \bigg( \frac{5 \mu^2}{8} y^{2q} + \mu (-q) y^{q-1} \bigg) (D_y v)^2 + \frac{\mu \gamma}{4} y^q | \nb_x v|^2 + ( - 2 c + 32 - 28 c^2 t^2) (\partial_t v)^2 \bigg].
\end{align*}

For the coefficient of $(\p_t v)^2$, note that \eqref{eq_c_choice} implies that
\[- 2 c + 32 - 28 c^2 t^2 > - 2 c + 32 - 28 c^2 T^2 > -2 + 32 - 28 > 1.\]

Then using \eqref{eq_carl_anew_6a3} and the above estimate in \eqref{eq_carl_anew_2}, we get
\begin{align*}
\frac{1}{2} e^{-2\lambda f} |P_{\sigma,y} u|^2 & \geqslant 2 \lambda \nb^\alpha J^1_\alpha + 2 \lambda e^h \bigg[ \left( \frac{5 \mu^2}{8} y^{2q} + \mu (-q) y^{q-1} \right) (D_y v)^2 + \frac{\mu \gamma}{4} y^q | \nb_x v|^2 + (\partial_t v)^2 \bigg] \\
& \quad + \left[A - 16 \lambda^2 e^{2h} \right] v^2, \nt \label{eq_carl_anew_6} 
\end{align*}
where we also used the fact that $|Sv|^2 \geq 0$.

Next, we use Hardy's inequality given in Lemma \ref{lemma_hardy} for two terms in the above estimate--for the $y^{q-1} (D_y v)^2 $ term we directly use \eqref{eq_hardy_anew} and for the $y^{2q} (D_y v)^2$ term we modify \eqref{eq_hardy_anew} using the transformation $q \rightarrow 2q+1$. This shows that 
\begin{align*}
\frac{5\mu^2}{8} & y^{2q} (D_y v)^2 + \mu (-q) y^{q-1}(D_y v)^2 \\
& \geqslant \frac{5\mu^2}{32} (1-2q)^2 y^{2q-2} |\nb y|^4 v^2 + \frac{\mu}{4} (-q) (2-q)^2 y^{q-3} |\nb y|^4 v^2 \\
& \qquad - \left\{ \frac{5\mu^2}{16} (1-2q) y^{2q-1} + \frac{\mu}{2} (-q)(2-q) y^{q-2} \right\} (\Delta y |\nb y|^2 + 2 \nb y \cdot \nb^2 y \cdot \nb y) v^2 \\
& \qquad + \nb \cdot \underbrace{ \left[ \left\{ \frac{5\mu^2}{16} (1-2q) y^{2q-1} + \frac{\mu}{2} (-q)(2-q) y^{q-2} \right\} |\nb y|^2 \nb y v^2 \right]}_{=: e^{-h}J^0}. \nt \label{eq_carl_anew_6y}
\end{align*}
Next, we will estimate the last term in the above expression. Including the appropriate coefficient $2 \lambda e^h$ from \eqref{eq_carl_anew_6}, we see that
\begin{align*}
2 \lambda e^h \nabla \cdot (e^{-h} J^0) & = 2 \lambda e^h ( e^{-h} \nabla \cdot J^0 + \nabla e^{-h} \cdot J^0 )\\
& = 2 \lambda e^h ( e^{-h} \nabla \cdot J^0 - e^{-h} \nabla h \cdot J^0 )\\
& = 2 \lambda ( \nabla \cdot J^0 - \nabla h \cdot J^0 ). \nt \label{eq_carl_anew_divJ_extra}
\end{align*}
For the second term in the above equation, using \eqref{eq_carl_anew_derh} shows that
\begin{align*}
-2 & \lambda \nabla h \cdot J^0 \\
& = -2 \lambda ( - \mu y^q \nb^\alpha y - 2ct \nb^\alpha t ) \left\{ \frac{5\mu^2}{16} (1-2q) y^{2q-1} + \frac{\mu}{2} (-q)(2-q) y^{q-2} \right\} e^h |\nb y|^2 \nb_\alpha y v^2\\
& = \lambda e^h \left\{ \frac{5\mu^3}{8} (1-2q) y^{3q-1} + \mu^2 (-q)(2-q) y^{2q-2} \right\} |\nb y|^4 v^2.
\end{align*}
Recalling \eqref{eq_carl_anew_JJH}, we substitute \eqref{eq_carl_anew_6y}, \eqref{eq_carl_anew_divJ_extra}, and the above expression into \eqref{eq_carl_anew_6} to get
\begin{align*}
\frac{1}{2} e^{-2 \lambda f} |P_{\sigma,y} u|^2 & \geqslant 2 \lambda \nb^\alpha J_\alpha + 2 \lambda e^h \left[ \frac{\mu \gamma}{4} y^q |\nabla_x v|^2 + (\partial_t v)^2 \right] \\
& \quad + \bigg[ A + \lambda e^h \frac{5\mu^2}{16} (1-2q)^2 y^{2q-2} |\nb y|^4 + \lambda e^h \frac{\mu}{2} (-q) (2-q)^2 y^{q-3} |\nb y|^4 \\
& \quad - \lambda e^h \bigg\{ \frac{5\mu^2}{8} (1-2q) y^{2q-1} + \mu (-q) (2-q) y^{q-2} \bigg\} (\Delta y |\nb y|^2 + 2 \nb y \cdot \nb^2 y \cdot \nb y) \\
& \quad + \lambda e^h \frac{5\mu^3}{8} (1-2q) y^{3q-1} |\nb y|^4 + \lambda e^h \mu^2 (-q)(2-q) y^{2q-2} |\nb y|^4 - 16 \lambda^2 e^{2h} \bigg] v^2 \\
& \geqslant 2 \lambda \nb^\alpha J_\alpha + 2 \lambda e^h \left[ \frac{\mu \gamma}{4} y^q |\nabla_x v|^2 + (\partial_t v)^2 \right] + I v^2, \nt \label{eq_carl_anew_7}
\end{align*}
where $I$ denotes the coefficient of $v^2$.

\noindent \textbf{Step C}: Next, we will estimate the zeroth order terms in \eqref{eq_carl_anew_7}, given by $Iv^2$, by separating them with respect to the powers of $\mu$ present in them. Recalling \eqref{eq_carl_anew_2a} we compute the parts of $A$ as follows
\begin{align*}
\nb^\alpha f \nb_\alpha A_0 & = - e^h \nb^\alpha h \nb_\alpha A_0 \\
& = - e^h ( -\mu y^q \nb^\alpha y \nb_\alpha A_0 + 2ct \p_t A_0 ) \\
& = - e^h \Big[ \mu^4 2 \lambda^2 e^{2h} y^{4q} |\nb y|^4 + \mu^3 \{ -2q \lambda^2 e^{2h} y^{3q-1} |\nb y|^4 - \lambda^2 e^{2h} y^{3q} \nb y \cdot \nb^2 y \cdot \nb y \} \\
& \qquad + \mu^2 \{ -8 \lambda^2 e^{2h} c^2 t^2 y^{2q} |\nb y|^2 - 8 \lambda^2 e^{2h} c^2 t^2 |\nb y|^2 \} + 2 \mu \sigma y^{q-3} |\nb y|^2 \\
& \qquad + 32 \lambda^2 e^{2h} c^4 t^4 - 16 \lambda^2 e^{2h} c^3 t^2 \Big] \\
& = -e^h \Big[ 2 \mu^4 \lambda^2 e^{2h} y^{4q} |\nb y|^4 + 2 \mu^3 (-q) \lambda^2 e^{2h} y^{3q-1} |\nb y|^4 + 2 \mu \sigma y^{q-3} |\nb y|^2 \\
& \qquad \quad - \mc{O} ( \mu^3 \lambda^2 e^{2h} y^{3q} ) \Big], \nt \label{eq_carl_anew_Y1}
\end{align*}
and 
\begin{align*}
\mc{E}_0 A_0 & = - 4 e^h [\lambda^2 e^{2h} (\mu^2 y^{2q} |\nb y|^2 - 4 c^2 t^2 ) + \sigma y^{-2} ]\\
& = - e^h \big[ 4 \mu^2 \lambda^2 e^{2h} y^{2q} |\nb y|^2 - 16 c^2 t^2 \lambda^2 e^{2h} + 4 \sigma y^{-2} \} \big] \\
& = - e^h \mc{O} ( \mu^3 \lambda^2 e^{2h} y^{3q} + \mu y^{q-2} ). \nt \label{eq_carl_anew_Y2}
\end{align*}
Finally, if we let $Q_0 := e^h B$, where
\begin{equation} \label{eq_carl_anew_Bdef}
B := - \frac12 \mu^2 y^{2q} |\nb y|^2 + \frac12 \mu q y^{q-1} |\nb y|^2 + \frac12 \mu y^q \Delta y + 2c^2t^2 - c + 32,
\end{equation}
then
\begin{align*}
\square Q_0 & = -\p_t^2 (e^h B) + \Delta (e^h B)\\
& = e^h B [ -(\p_t h)^2 + |\nb h|^2 + \square h ] + e^h [ -2 \p_t h \p_t B + 2 \nb h \cdot \nb B + \square B ]\\
& =: T_1 + T_2. \nt \label{eq_carl_anew_Y90}
\end{align*}
For the above terms, we use \eqref{eq_c_choice}, \eqref{eq_carl_anew_derh}, and \eqref{eq_carl_anew_Bdef} to see that 
\begin{align*}
T_1 & := e^h B [ -(\p_t h)^2 + |\nb h|^2 + \square h ] \\
& = e^h \left[ - \frac12 \mu^2 y^{2q} |\nb y|^2 + \frac12 \mu q y^{q-1} |\nb y|^2 + \frac12 \mu y^q \Delta y + 2c^2t^2 - c + 32 \right] \cdot \\
& \qquad \quad [ \mu^2 y^{2q} |\nb y|^2 - \mu q y^{q-1} |\nb y|^2 - \mu y^q \Delta y -4c^2 t^2 + 2c ] \\
= \ & e^h \bigg[ -\frac{1}{2} \mu^4 y^{4q} |\nb y|^4 + \mu^3 q y^{3q-1} |\nb y|^4 - \frac{1}{2} \mu^2 q^2 y^{2q-2} |\nb y|^4 - \mc{O} (\mu^3 y^{3q} + \mu^2 y^{2q-1}) \bigg], \nt \label{eq_carl_anew_Y9a}
\end{align*}
and
\begin{align*}
T_2 & := e^h [ -2 \p_t h \p_t B + 2 \nb h \cdot \nb B + \square B ] \\
& = e^h \bigg[ 2 \mu^3 q y^{3q-1} |\nb y|^4 - \mu^2 q (3q-2) y^{2q-2} |\nb y|^4 + \frac{1}{2} \mu q (q-1) (q-2) y^{q-3} |\nb y|^4 \\
& \qquad - \mc{O} (\mu^3 y^{3q} + \mu^2 y^{2q-1} + \mu y^{q-2}) \bigg]. \nt \label{eq_carl_anew_Y9b}
\end{align*}
Substituting \eqref{eq_carl_anew_Y90} and \eqref{eq_carl_anew_Y9a} into \eqref{eq_carl_anew_Y9b} we get
\begin{align*}
\square Q_0 & = e^h \bigg[ -\frac{1}{2} \mu^4 y^{4q} |\nb y|^4 + \mu^3 q y^{3q-1} |\nb y|^4 - \frac{1}{2} \mu^2 q^2 y^{2q-2} |\nb y|^4 + 2 \mu^3 q y^{3q-1} |\nb y|^4 \\
& \qquad - \mu^2 q (3q-2) y^{2q-2} |\nb y|^4 + \frac{1}{2} \mu q (q-1) (q-2) y^{q-3} |\nb y|^4 \\
& \qquad - \mc{O} (\mu^3 y^{3q} + \mu^2 y^{2q-1} + \mu y^{q-2}) \bigg].
\end{align*}

Combining \eqref{eq_carl_anew_Y1}, \eqref{eq_carl_anew_Y2}, and the above equation we see that the coefficient of the zeroth order term in \eqref{eq_carl_anew_7} reduces to
\begin{align*}
I & = \lambda e^h \bigg[ 2 \mu^4 \lambda^2 e^{2h} y^{4q} |\nb y|^4 + 2 \mu^3 (-q) \lambda^2 e^{2h} y^{3q-1} |\nb y|^4 + 2 \mu \sigma y^{q-3} |\nb y|^2 - \frac{1}{2} \mu^4 y^{4q} |\nb y|^4 \\
& \qquad + \mu^3 q y^{3q-1} |\nb y|^4 - \frac{1}{2} \mu^2 q^2 y^{2q-2} |\nb y|^4 + 2 \mu^3 q y^{3q-1} |\nb y|^4 - \mu^2 q (3q-2) y^{2q-2} |\nb y|^4 \\
& \qquad + \frac{1}{2} \mu q (q-1) (q-2) y^{q-3} |\nb y|^4 + \frac{5\mu^2}{16} (1-2q)^2 y^{2q-2} |\nb y|^4 + \frac{\mu}{2} (-q) (2-q)^2 y^{q-3} |\nb y|^4 \\
& + \frac{5\mu^3}{8} (1-2q) y^{3q-1} |\nb y|^4 + \mu^2 (-q)(2-q) y^{2q-2} |\nb y|^4 - \mc{O} ( \mu^3 \lambda^2 e^{2h} y^{3q} + \mu^2 y^{2q-1} + \mu y^{q-2} ) \bigg] \\
& = \lambda e^h \bigg[ \mu^4 y^{4q} |\nb y|^4 \left\{ 2 \lambda^2 e^{2h} - \frac{1}{2} \right\} + \mu^3 y^{3q-1} |\nb y|^4 \left\{ 3q + 2 (-q) \lambda^2 e^{2h} + \frac{5}{8} (1-2q) \right\} \\
& \qquad + \mu^2 y^{2q-2} |\nb y|^4 \left\{ -\frac{q^2}{2} - q(3q-2) + \frac{5}{16} (1-2q)^2 + (-q) (2-q)  \right\} \\
& \qquad + \mu y^{q-3} |\nb y|^4 \left\{ 2 \sigma |\nb y|^{-2} + \frac{1}{2} q (q-1) (q-2) + \frac{1}{2} (-q) (2-q)^2 \right\} \\
& \qquad - \mc{O} ( \mu^3 \lambda^2 e^{2h} y^{3q} + \mu^2 y^{2q-1} + \mu y^{q-2} ) \bigg]\\
& =: I_4 + I_3 + I_2 + I_1 - \mc{O} ( \mu^3 \lambda^2 e^{2h} y^{3q} + \mu^2 y^{2q-1} + \mu y^{q-2} ), \nt \label{eq_carl_anew_Y5}
\end{align*}
where $I_j$ consists of the terms with $\mu^j$, for $j = 1,2,3,4$.

Note that, due to the definition of $b$ and equation \eqref{eq_c_choice}, we have
\begin{equation} \label{eq_hesty}
h(t,x) = \frac{\mu}{1+q} [ b^{1+q} - (y(x))^{1+q} ] - ct^2 + \beta \geqslant -ct^2 \geqslant -T.
\end{equation}
Then, the above estimate implies that
\[ \lambda e^{h} \geqslant \lambda e^{- T}. \]
Thus, choosing $\lambda \gg_{\Omega,q} e^{T} $ ensures that $ \lambda e^h \gg 1$. 

For $I_4$, since the first term is positive and has a higher power of $\lambda e^h$ with it, the above choice of $\lambda$ ensures that this term is positive. Thus, we get
\begin{equation} \label{eq_carl_anew_Z2}
I_4 := \lambda e^{h} \mu^4 y^{4q} |\nb y|^4 \left\{ 2 \lambda^2 e^{2h} - \frac{1}{2} \right\} \geqslant \mu^4 \lambda^3 e^{3h} y^{4q} |\nabla y|^4.
\end{equation}

For $I_3$, we have
\begin{equation} \label{eq_carl_anew_7d}
I_3 := \lambda e^{h} \mu^3 y^{3 q-1} |\nb y|^4 \left[ 3q + 2 (-q) \lambda^2 e^{2 h} + \frac{5}{8} (1-2q) \right] \geqslant \mu^3 \lambda^3 e^{3 h} (-q) y^{3q-1} |\nb y|^4,
\end{equation}
when $\lambda$ is large enough.

For $I_2$, we have the following
\begin{align*}
I_2 & := \lambda e^{h} \mu^2 y^{2q-2} |\nabla y|^4 \left\{ -\frac{q^2}{2} - q(3q-2) + \frac{5}{16} (1-2q)^2 + (-q) (2-q) \right\} \\
& = \lambda e^{h} \mu^2 y^{2q-2} |\nabla y|^4 \left\{ -\frac{5}{4} q^2 - \frac{5}{4} q + \frac{5}{16} \right\}\\
& \geqslant \frac{5}{16} \mu^2 \lambda e^h |\nabla y|^4 y^{2q-2}, \nt \label{eq_carl_anew_8}
\end{align*}
whenever $q$ satisfies \eqref{eq_carl_anew_0a}. 

For $I_1$, we recall \eqref{eq_carl_anew_0a} and note that
\begin{align*}
\frac{1}{2} & \{ q (q-1) (q-2) - q(2-q)^2\} y^{q-3} |\nb y|^4 + 2 \sigma y^{q-3} |\nb y|^2 \\
& = \left[ - \frac{1}{2} q (2-q) + 2 \sigma |\nb y|^{-2} \right] y^{q-3} |\nb y|^4\\
& = \left[ - \frac{1}{2} q (2-q) + 2 \sigma - 2 \sigma + 2 \sigma |\nb y|^{-2} \right] y^{q-3} |\nb y|^4\\
& = \left[ - \frac{q}{2}(2-q) + 2 \sigma \right] y^{q-3} |\nb y|^4 + 2 \sigma (|\nb y|^{-2} -1) y^{q-3} |\nb y|^4\\
& = 2 \sigma (|\nb y|^{-2} -1) y^{q-3} |\nb y|^4, 
\end{align*}
and this term vanishes near the boundary since $|\nb y|^2 = 1$ here. Away from the boundary, we have the following bound
\begin{align*} 
I_1 \geqslant - C \mu \lambda e^h y^{2q-1} - C_2 \mu \lambda e^h y^{q-2} |\nb y|^4. \nt \label{eq_carl_anew_Z4}
\end{align*}

% \begin{align*}
% I_4 &\geqslant \mu^4 \lambda e^h (\lambda^2 e^{2h} y^{4q} | \nabla y|^4).\\
% I_3 &\geqslant \mu^3 \lambda^3 e^{3 h} (-q) y^{3q-1} |\nb y|^4 - C \mu^3 \lambda^3 e^{3h} y^{3q}\\
% I_2 &\geqslant \frac{3}{8} \mu^2 \lambda e^h |\nabla y|^4 y^{2q-2} - C_2 \mu^2 \lambda e^h y^{q-2} |\nb y|^4 - C \mu^2 \lambda e^h y^{2q-1} - C \mu^2 \lambda^3 e^{3h} y^{3q},    
% \end{align*}
Combining \eqref{eq_carl_anew_Y5}-\eqref{eq_carl_anew_Z4}, and substituting into \eqref{eq_carl_anew_7}, the above discussion shows that 
\begin{align*}
\frac{1}{2} e^{-2 \lambda f} |P_{\sigma,y} u|^2 & \geqslant 2 \lambda \nb^\alpha J_\alpha + 2 \lambda e^h \left[ \frac{\mu \gamma}{4} y^q |\nabla_x v|^2 + (\partial_t v)^2 \right] \\
& \quad + C_1\Big[ \mu^4 \lambda^3 e^{3h} y^{4q} + \mu^3 \lambda^3 e^{3 h} y^{3q-1} + \mu^2 \lambda e^h y^{2q-2} \Big] |\nabla y|^4 v^2\\
& \quad - C_2 \Big[ \mu^3 \lambda^3 e^{3h} y^{3q} + \mu^2 \lambda e^h y^{2q-1} + \mu \lambda e^h y^{q-2} \Big] v^2, %\nt \label{eq_carl_anew_9}
\end{align*}
for some constants $C_1,C_2 >0$.

Now let $\mathbb{B}_\delta(x_*) $ be a neighbourhood around the critical point $x_*$, where $\nb y (x_*) = 0$ and also $\nb y (x) \neq 0$ for all $x \in \Omega \setminus \mathbb{B}_\delta(x_*)$. Then, the above estimate reduces to
\begin{align*}
\frac{1}{2} e^{-2 \lambda f} |P_{\sigma,y} u|^2 & \geqslant 2 \lambda \nb^\alpha J_\alpha + 2 \lambda e^h \left[ \frac{\mu \gamma}{4} y^q |\nabla_x v|^2 + (\partial_t v)^2 \right] \\
& \quad + C_1\Big[ \mu^4 \lambda^3 e^{3h} y^{4q} + \mu^3 \lambda^3 e^{3 h} y^{3q-1} + \mu^2 \lambda e^h y^{2q-2} \Big] \mathbf{1}_{\Omega \setminus \mathbb{B}_\delta(x_*)} v^2\\
& \quad - C_2 \Big[ \mu^3 \lambda^3 e^{3h} y^{3q} + \mu^2 \lambda e^h y^{2q-1} + \mu \lambda e^h y^{q-2} \Big] v^2. %\nt \label{eq_carl_anew_10}
\end{align*}
Moreover, on the region $\Omega \setminus \mathbb{B}_\delta(x_*)$, the negative term in the right hand side of the above estimate can be absorbed into the positive term by taking $\mu$ large enough. Hence, we get
\begin{align*}
\frac{1}{2} e^{-2 \lambda f} |P_{\sigma,y} u|^2 & \geqslant 2 \lambda \nb^\alpha J_\alpha + 2 \lambda e^h \left[ \frac{\mu \gamma}{4} y^q |\nabla_x v|^2 + (\p_t v)^2 \right] \\
& \quad + C_1\Big[ \mu^4 \lambda^3 e^{3h} y^{4q} + \mu^3 \lambda^3 e^{3 h} y^{3q-1} + \mu^2 \lambda e^h y^{2q-2} \Big] \mathbf{1}_{\Omega \setminus \mathbb{B}_\delta(x_*)} v^2\\
& \quad - C_2 \Big[ \mu^3 \lambda^3 e^{3h} y^{3q} + \mu^2 \lambda e^h y^{2q-1} + \mu \lambda e^h y^{q-2} \Big] \mathbf{1}_{\mathbb{B}_\delta(x_*)} v^2. \nt \label{eq_carl_anew_11}
\end{align*}
\noindent \textbf{Step D}: Finally, we reverse the conjugation $v = e^{-\lambda f} u$. For this purpose, note that
\[\nb u = e^{\lambda f} \nb v + \lambda e^{\lambda f} \nb f v.\]
Then we see that
\begin{align*} 
e^{-2\lambda f} |\nabla_x u|^2 \lesssim |\nabla_x v|^2 + \lambda^2 (\nb_x f)^2 v^2 \lesssim |\nabla_x v|^2 + \mu^2 \lambda^2 e^{2h} y^{2q} |\nb y|^2 e^{-2\lambda f} u^2,
\end{align*}
which implies that
\begin{equation}
y^q e^{-2\lambda f} |\nabla_x u|^2 \leqslant C y^q |\nabla_x v|^2 + C' \mu^2 \lambda^2 e^{2h} y^{3q} \textbf{1}_{ \Omega \setminus \mathbb{B}_\delta(x_*) } e^{-2\lambda f} u^2 + C'' \mu^2 \lambda^2 e^{2h} y^{3q} \textbf{1}_{ \mathbb{B}_\delta(x_*) } e^{-2\lambda f} u^2. \nt \label{eq_carl_anew_12}
\end{equation}
Similarly, we also have the following
\begin{align*}
e^{-2\lambda f} (\p_t u)^2 \leqslant 2 (\p_t v)^2 + 2 \lambda^2 (\p_t f)^2 v^2 \leqslant 2 (\p_t v)^2 + 8 \lambda^2 e^{2h} c^2 t^2  e^{-2\lambda f} u^2,
\end{align*}
which implies that
\begin{equation}
e^{-2\lambda f} (\p_t u)^2 \leqslant C (\p_t v)^2 + C' \lambda^2 e^{2h} \textbf{1}_{ \Omega \setminus \mathbb{B}_\delta(x_*) } e^{-2\lambda f} u^2 + C'' \lambda^2 e^{2h} \textbf{1}_{ \mathbb{B}_\delta(x_*) } e^{-2\lambda f} u^2, \nt \label{eq_carl_anew_13}
\end{equation}
where we also used \eqref{eq_c_choice}.
Finally, we multiply \eqref{eq_carl_anew_12} with $ \mu \lambda e^h \gamma$ and \eqref{eq_carl_anew_13} with $2\lambda e^h$, then substitute into \eqref{eq_carl_anew_11} and use large enough $\mu$ to conclude the proof of the theorem.
\end{proof}

\subsubsection{The Two-Foliation Estimate}

Note that there are a few negative terms present on the right hand side of pointwise Carleman estimate \eqref{eq_carl_anew_thm}.
To eliminate these negative terms, we will write \eqref{eq_carl_anew_thm} about two different CBDFs and sum their contributions.

The summed estimate, which no longer contains these negative terms, is as follows:

\begin{lemma} \label{lemma_2pts}
Suppose that Assumption \ref{assump_h0} holds and let $T>0$. Assume that $q, \sigma$ satisfy \eqref{eq_carl_anew_0a}. Then, there exist $C, \lambda_0, \mu_0 > 0$ and a boundary defining pair $(y_1,y_2)$ such that for all $u \in C^2([-T,T] \times \Omega) $ and all $\lambda \geqslant \lambda_0, \mu \geqslant \mu_0$, the following holds on $[-T,T] \times \Omega$
\begin{align*}
\frac{1}{2} e^{-2 \lambda f_j} |P_{\sigma,y_j} u|^2 & \geqslant 2 \lambda \sum_{j=1}^2 \nb \cdot J_j + C \lambda \sum_{j=1}^2 e^{h_j} e^{-2\lambda f_j} \left[ \mu y_j^q |\nabla_x u|^2 + (\p_t u)^2 \right] \\
& \quad + C' \sum_{j=1}^2 \Big[ \mu^4 \lambda^3 e^{3h_j} y_j^{4q} + \mu^3 \lambda^3 e^{3 h_j} y_j^{3q-1} + \mu^2 \lambda e^{h_j} y_j^{2q-2} \Big] e^{-2\lambda f_j} u^2, \nt \label{eq_carl_anew_2pt_0}
\end{align*}
where
\begin{equation*} %\label{eq_carl_anew_2pt_0b}
f_j (t,y_j(x)) := a - \exp h_j (t,y_j(x)), \quad h_j (t,y_j(x)) := \frac{\mu}{1+q} [b_j^{1+q} - (y_j(x))^{1+q}] - ct^2 + \beta_j,
\end{equation*}
for $a > 0$ an arbitrary constant, $b_j = \displaystyle \sup_{\Omega} y_j$, for $j=1,2$, and $\beta_j>0$, for $j=1,2$, are appropriately chosen constants, $c$ satisfies \eqref{eq_c_choice}, and where $J_j := J_j^0 + J_j^1$ with the vector fields $J_j^0$ and $J_j^1$ given by
\begin{align*} 
J^1_{j,\alpha} & := \nb^\beta f_j \nb_\alpha (e^{-\lambda f_j} u) \nb_\beta (e^{-\lambda f_j} u) + \frac{1}{2} Q_{0,j} \nb_\alpha (e^{-\lambda f_j} u)^2 - \frac{1}{2} \nb_\alpha f_j \nb^\beta (e^{-\lambda f_j} u) \nb_\beta (e^{-\lambda f_j} u) \\
& \qquad - \frac{1}{2} \nb_\alpha Q_{0,j} (e^{-\lambda f_j} u)^2 + \frac{1}{2}\nb_\alpha f_j A_{0,j} (e^{-\lambda f_j} u)^2, \\ %\nt \label{eq_carl_anew_JJH_2pt}\\
J^0_{j,\alpha} & := \left\{ \frac{5\mu^2}{16} (1-2q) y_j^{2q-1} + \frac{\mu}{2} (-q)(2-q) y_j^{q-2} \right\} e^{h_j}  |\nb y_j|^2 \nb_\alpha y_j (e^{-\lambda f_j} u)^2,
\end{align*}
and $Q_{0,j}$ and $A_{0,j}$ are given by
\begin{align*} \nt \label{eq_carl_anew_2pt_Q}
Q_{0,j} & := \frac{1}{2} \square f_j + 32 e^{h_j}, \\
A_{0,j} & := \lambda^2 \nb^\alpha f_j \nb_\alpha f_j + \sigma y_j^{-2}.
\end{align*} 
\end{lemma}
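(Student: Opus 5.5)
We use Lemma \ref{lemma_bdpair} to obtain a convex boundary defining pair $(y_1,y_2)$ with distinct critical points $x_{1,*}\neq x_{2,*}$. We then apply Theorem \ref{thm_carl_anew} separately to $y_1$ and $y_2$ (with the same $\lambda,\mu,c,a$ but different $\beta_j$) and sum the two pointwise inequalities. Since $y_1=y_2$ near $\Gamma$, the left-hand sides are the intended ones; the left side of \eqref{eq_carl_anew_2pt_0} is understood as summed over $j$. We fix $\delta>0$ so small that $\mathbb{B}_{2\delta}(x_{1,*})\cap\mathbb{B}_{2\delta}(x_{2,*})=\emptyset$, both balls lie in $\{d_\Gamma>2d_0\}$, and $|\nabla y_j|\geqslant c_\delta>0$ on $\mathbb{B}_\delta(x_{k,*})$ for $k\neq j$.

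The summed inequality has two negative terms, of the form $-C''[\mu^3\lambda^3e^{3h_j}y_j^{3q}+\mu^2\lambda e^{h_j}y_j^{2q-1}+\mu\lambda e^{h_j}y_j^{q-2}]\mathbf{1}_{\mathbb{B}_\delta(x_{j,*})}e^{-2\lambda f_j}u^2$. Each must be absorbed into the positive zero-order term of the \emph{other} estimate, $C'\mu^4\lambda^3e^{3h_k}y_k^{4q}\mathbf{1}_{\Omega\setminus\mathbb{B}_\delta(x_{k,*})}e^{-2\lambda f_k}u^2$, which is active on $\mathbb{B}_\delta(x_{j,*})$. On these balls $y_1,y_2$ lie between $d_0$ and $\max b_j$, so all powers of $y_j$ are harmless constants. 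The $\mu$-powers are favourable: $\mu^4$ dominates $\mu^3$ and $\mu^2,\mu$, and $\lambda^3e^{3h}$ dominates $\lambda e^h$ since $\lambda e^h\gg1$. The remaining issue is comparing the weights $e^{3h_k}e^{-2\lambda f_k}$ with $e^{3h_j}e^{-2\lambda f_j}$ on $\mathbb{B}_\delta(x_{j,*})$.

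\textbf{Main obstacle: matching the exponential weights via $\beta_1,\beta_2$.} We have $e^{-2\lambda f_k}/e^{-2\lambda f_j}=\exp(2\lambda(e^{h_k}-e^{h_j}))$, and $h_k-h_j$ is independent of $t$:
\[
h_k-h_j=\frac{\mu}{1+q}\big[b_k^{1+q}-b_j^{1+q}-y_k^{1+q}+y_j^{1+q}\big]+\beta_k-\beta_j .
\]
The plan is to make $h_k>h_j$ strictly on $\mathbb{B}_\delta(x_{j,*})$ for both $(j,k)=(1,2),(2,1)$, i.e. to have $e^{h_k}\geqslant e^{h_j}$, which suffices even without large gains. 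Set $g_j:=\frac{\mu}{1+q}(b_j^{1+q}-y_j^{1+q})$. Then $g_j$ attains its minimum at $x_{j,*}$ (where $y_j=b_j$) and $g_j\ge0$. Since $y_2$ is a small perturbation of $y_1$ and $x_{k,*}$ is the unique maximum of $y_k$, we have $y_k<b_k-\epsilon$ on $\mathbb{B}_\delta(x_{j,*})$, so $g_k\geqslant\epsilon'\mu$ there. Shrinking $\delta$ as needed gives $g_j\leqslant\epsilon'\mu/4$ on $\mathbb{B}_\delta(x_{j,*})$. Taking $\beta_1=\beta_2=\beta$ then gives $h_k-h_j\geqslant\tfrac34\epsilon'\mu>0$ on $\mathbb{B}_\delta(x_{j,*})$ symmetrically; this symmetry is exactly why the same constants work for both $j$. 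Consequently the positive term carries an extra factor $e^{3(h_k-h_j)}\geqslant e^{2\epsilon'\mu}\ge1$ and $e^{-2\lambda f_k}\geqslant e^{-2\lambda f_j}$, so for $\mu\geqslant\mu_0$ it absorbs the negative term. Half of the positive term is kept, yielding the unrestricted zero-order terms on the right of \eqref{eq_carl_anew_2pt_0}.

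It then remains to handle the indicator on the positive zero-order term of estimate $j$ near $x_{j,*}$. There the other estimate's positive term dominates with a better weight, so the full $\sum_j$ zero-order expression without indicators is recovered up to constants; again all $y$-powers are bounded there. The first-order terms pass directly from Theorem \ref{thm_carl_anew}, with $\gamma/4$ absorbed into $C$.
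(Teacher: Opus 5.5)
Your proposal is correct and follows the paper's argument. You sum the two pointwise estimates of Theorem \ref{thm_carl_anew} for the pair from Lemma \ref{lemma_bdpair}, and absorb each negative term on $\mathbb{B}_\delta(x_{j,*})$ into the other estimate's positive zero-order term after arranging $h_k \geqslant h_j$ there, so that $e^{3h_k}e^{-2\lambda f_k} \geqslant e^{3h_j}e^{-2\lambda f_j}$.

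The only difference is how you arrange $h_k \geqslant h_j$. The paper fixes levels $r_1, r_2$, chooses $\beta_1 - \beta_2$ (depending on $\mu$) so that $h_1(t,r_1) = h_2(t,r_2)$, and uses monotonicity of $h_j$ in $y_j$. You instead take $\beta_1 = \beta_2$ and get $h_k - h_j \geqslant \tfrac{3}{4}\epsilon'\mu$ directly by continuity; this is the paper's construction with $r_j$ chosen so that $b_1^{1+q} - r_1^{1+q} = b_2^{1+q} - r_2^{1+q}$. You also make explicit the weight comparison needed to drop the indicators, which the paper leaves implicit.
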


\begin{proof}
We first apply Lemma \ref{lemma_bdpair} to obtain that there is a boundary defining pair $(y_1,y_2)$. That is, for each $j = 1, 2$, the function $y_j$ has a unique critical point $x_j \in \Omega$, satisfying $d_\Gamma(x_j) > 2 d_0$. Let us denote the maximum attained by $y_j$ as $R_j := y_j(x_j)$. Since $x_1 \neq x_2$ and $x_j$ is the unique global maximum of $y_j$, there exist $\delta > 0, 0 < r_1 < R_1$, and $0 < r_2 < R_2$, satisfying the following
\begin{align*}
B_\delta(x_1) \cap B_\delta(x_2) & = \emptyset, \\
\{ r_1 \leqslant y_1 \leqslant R_1 \} \cap \{ y_2 \leqslant r_2 \} & \supseteq B_\delta(x_1),  \nt \label{eq_jcarl_anew_1}\\
\{ r_2 \leqslant y_2 \leqslant R_2 \} \cap \{ y_1 \leqslant r_1 \} & \supseteq B_\delta(x_2).
\end{align*}

Next, if we apply Theorem \ref{thm_carl_anew} by taking $y=y_j$, and sum over $j=1,2$, we end up with
\begin{align*}
\sum_{j=1}^2 \frac{1}{2} e^{-2\lambda f_j} & |P_{\sigma,y_j} u|^2 \geqslant 2 \lambda \sum_{j=1}^2 \nb \cdot J_j + C \lambda \sum_{j=1}^2 e^{h_j} e^{-2\lambda f_j} \left[ \mu y_j^q |\nabla_x u|^2 + (\p_t u)^2 \right] \\
& \qquad + C' \sum_{j=1}^2 \Big[ \mu^4 \lambda^3 e^{3h_j} y_j^{4q} + \mu^3 \lambda^3 e^{3 h_j} y_j^{3q-1} + \mu^2 \lambda e^{h_j} y_j^{2q-2} \Big] \textbf{1}_{ \Omega \setminus \mathbb{B}_{\delta} (x_j) } e^{-2\lambda f_j} u^2 \\
& \qquad - C'' \sum_{j=1}^2 \Big[ \mu^3 \lambda^3 e^{3 h_j} y_j^{3q} + \mu^2 \lambda e^{h_j} y_j^{2q-1} + \mu \lambda e^{h_j} y_j^{q-2} \Big] \mathbf{1}_{ \mathbb{B}_\delta (x_j)} e^{-2\lambda f_j} u^2. \nt \label{eq_2pt_anew_1} 
\end{align*}

Now, if we choose $\beta_1, \beta_2 > 0$ such that
\[ \beta_1 - \beta_2 = \frac{\mu}{1+q} \left( b_2^{1+q} - b_1^{1+q} - r_2^{1+q} + r_1^{1+q} \right), \]
then for $t$ fixed, we have
\begin{equation} \label{eq_jcarl_anew_2}
h_1(t,r_1) = h_2(t,r_2) \ \text{ and } \ f_1(t,r_1) = f_2(t,r_2).
\end{equation}

Note that, for fixed $t$, $h_j$ and $-f_j$ for $j = 1,2$ are decreasing functions of $y$. For $j = 1, 2$ define $j^* := 3-j$. Then, \eqref{eq_jcarl_anew_1} implies that $y_j$ and $y_{j^*}$ are bounded away from zero on $\textbf{1}_{ \mathbb{B}_{\delta} (x_j)}$. Thus, for fixed $t$ and some $C>0$, we get that
\begin{align*}
\mu^3 \lambda^3 e^{3h_j(y_j)} e^{-2\lambda f_j(y_j)} y_j^{3q} \textbf{1}_{ \mathbb{B}_{\delta} (x_j) } & \leqslant \mu^3 \lambda^3 e^{3h_j(r_j)} e^{-2\lambda f_j(r_j)} y_j^{3q} \textbf{1}_{ \mathbb{B}_{\delta} (x_j)}\\
& \leqslant C \mu^3 \lambda^3 e^{3h_{j^*}(r_{j^*})} e^{-2\lambda f_{j^*}(r_{j^*})} y_{j^*}^{4q} y_{j^*}^{-4q} \textbf{1}_{ \mathbb{B}_{\delta} (x_j) }\\
& \leqslant C \mu^3 \lambda^3 e^{3h_{j^*}(y_{j^*})} e^{-2\lambda f_{j^*}(y_{j^*})} y_{j^*}^{4q} \textbf{1}_{ \mathbb{B}_{\delta} (x_j)}\\
& \leqslant C \mu^3 \lambda^3 e^{3h_{j^*}(y_{j^*})} e^{-2\lambda f_{j^*}(y_{j^*})} y_{j^*}^{4q} \textbf{1}_{ \Omega \setminus \mathbb{B}_{\delta} (x_{j^*})}, \nt \label{eq_2pt_2}
\end{align*}
where we used the fact that $h_j, -f_j$'s are monotonic and \eqref{eq_jcarl_anew_1} in the first and third step, and \eqref{eq_jcarl_anew_2} in the second step, while the last step is due to set inclusion. Then taking large enough $\mu$, we can absorb this negative term into the positive terms in the right hand side of \eqref{eq_2pt_anew_1}. We can do the same analysis for the term $-\mu \lambda e^{h_j} y_j^{q-2}$ in \eqref{eq_2pt_anew_1}. Hence, we get
\begin{align*}
\sum_{j=1}^2 \frac{1}{2} e^{-2\lambda f_j} |P_{\sigma,y_j} u|^2 & \geqslant 2 \lambda \sum_{j=1}^2 \nb \cdot J_j + C \lambda \sum_{j=1}^2 e^{h_j} e^{-2\lambda f_j} \left[ \mu y_j^q |\nabla_x u|^2 + (\p_t u)^2 \right] \\
& \quad + C' \sum_{j=1}^2 \Big[ \mu^4 \lambda^3 e^{3h_j} y_j^{4q} + \mu^3 \lambda^3 e^{3 h_j} y_j^{3q-1} + \mu^2 \lambda e^{h_j} y_j^{2q-2} \Big] \textbf{1}_{ \Omega \setminus \mathbb{B}_{\delta} (x_j) } e^{-2\lambda f_j} u^2 \\
& \quad - C'' \sum_{j=1}^2 \mu^2 \lambda e^{h_j} y_j^{2q-1} \mathbf{1}_{ \mathbb{B}_\delta (x_j)} e^{-2\lambda f_j} u^2. \nt \label{eq_2pt_anew_1A} 
\end{align*}

For the negative term in the above estimate, we compute analogous to \eqref{eq_2pt_2} to get that
\[\mu^2 \lambda e^{h_j(y_j)} e^{-2\lambda f_j(y_j)} y_j^{2q-1} \textbf{1}_{ \mathbb{B}_{\delta} (x_j) } \leqslant C \mu^2 \lambda e^{h_{j^*}(y_{j^*})} e^{-2\lambda f_{j^*}(y_{j^*})} y_{j^*}^{3q-1} \textbf{1}_{ \Omega \setminus \mathbb{B}_{\delta} (x_{j^*})}.\]
To absorb this into the positive term $\mu^3 \lambda^3 e^{3 h_{j^*}(y_{j^*})} y_{j^*}^{3q-1} e^{-2\lambda f_{j^*}(y_{j^*})}$ present in the right hand side of \eqref{eq_2pt_anew_1A}, we need to show that $\mu \lambda^2 e^{2h_{j^*}(y_{j^*})} \gg 1$. For this purpose, note that, analogous to \eqref{eq_hesty}, we have
\begin{align*}
h_{j^*}(y_{j^*}) = \frac{\mu}{1+q} ( b_{j^*}^{1+q} - y_{j^*}^{1+q} ) - ct^2 + \beta_{j_*} \geqslant -c t^2 \geqslant - T.
\end{align*}
Then, the above estimate implies that
\[ \mu \lambda^2 e^{2 h_{j^*}(y_{j^*})} \geqslant \mu \lambda^2 e^{-2 T} \geqslant \lambda^2 e^{-2 T}, \]
since $\mu \gg 1$. The same choice of $\lambda$ as before, that is, $\lambda \gg e^T $ ensures that $\mu \lambda^2 e^{2h_{j^*}(y_{j^*})} \gg 1$. Thus, we can absorb this term into the positive terms in \eqref{eq_2pt_anew_1A} by choosing a large enough $\lambda$. 
Hence, \eqref{eq_2pt_anew_1A} reduces to
\begin{align*}
\sum_{j=1}^2 \frac{1}{2} e^{-2\lambda f_j} |P_{\sigma,y_j} u|^2 & \geqslant 2 \lambda \sum_{j=1}^2 \nb \cdot J_j + C \lambda \sum_{j=1}^2 e^{h_j} e^{-2\lambda f_j} \left[ \mu y_j^q |\nabla_x u|^2 + (\p_t u)^2 \right] \\
& + C' \sum_{j=1}^2 \Big[ \mu^4 \lambda^3 e^{3h_j} y_j^{4q} + \mu^3 \lambda^3 e^{3 h_j} y_j^{3q-1} + \mu^2 \lambda e^{h_j} y_j^{2q-2} \Big] \textbf{1}_{ \Omega \setminus \mathbb{B}_{\delta} (x_j) } e^{-2\lambda f_j} u^2.
\end{align*}

Furthermore, we can remove $\textbf{1}_{ \Omega \setminus \mathbb{B}_{\delta} (x_j) }$ in the above estimate, since summing this factor over $j=1,2$ covers $\Omega$, and $y_1,y_2$ are bounded away from zero on $\mathbb{B}_\delta(x_1) \cup \mathbb{B}_\delta (x_2)$. This completes the proof of the lemma.
\end{proof}

\subsection{The Integrated Estimate}

Finally, we state and prove our main Carleman estimate.
This is a consequence of integrating our combined pointwise estimate from Lemma \ref{lemma_2pts}.

\begin{theorem} \label{thm_carl_est_main}
Assume the setting and wave equation from Assumption \ref{ass.setting}.
Also, assume that $\Omega$ satisfies the convexity property of Assumption \ref{assump_h0}, and assume $T$ is sufficiently large depending on $n$, $\sigma$, and $\Omega$.
Then, there exist a convex boundary defining pair $(y_1 := y, y_2)$ and constants $a, b_1, b_2, \beta_1, \beta_2, c, \lambda_0, \mu_0 > 0$ such that the following inequality holds for all $\lambda \geqslant \lambda_0, \mu \geqslant \mu_0$ and every boundary admissible $u: ( -T, T ) \times \bar{\Omega} \rightarrow \R$ that is supported on $\mathcal{C} \cap \{ |t| < T - \delta \}$ for some $\delta > 0$,
\begin{align*}
\sum_{j=1}^2 \int_{\mc{C}} e^{-2\lambda f_j} |P_{\sigma,y_j} u|^2 + \lambda & \sum_{j=1}^2 \int_{\p\mc{C}} e^{h_j} e^{-2 \lambda f_j} (\mc{N}_q u)^2  \geqslant C \lambda \sum_{j=1}^2 \int_\mc{C} e^{h_j} e^{-2\lambda f_j} [ \mu y_j^q |\nabla_x u|^2 + (\p_t u)^2] \\
& \quad + C' \sum_{j=1}^2 \int_\mc{C} e^{-2\lambda f_j} ( \mu^4 \lambda^3 e^{3 h_j}  y_j^{4q} + \mu^3 \lambda^3 e^{3 h_j} y_j^{3q-1} + \mu^2 \lambda e^{h_j} y_j^{2q-2} ) u^2 \text{,}
\end{align*}
where $C, C' > 0$ depend on $n$, $T$, $\Omega$, and where
\[f_j (t, y_j(x)) := a - \exp h_j(t,y_j(x)), \quad h_j(t,y_j(x)) := \frac{\mu}{1+q} [b_j^{1+q} - (y_j(x))^{1+q}] - ct^2 + \beta_j.\]
\end{theorem}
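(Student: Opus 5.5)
We integrate the pointwise estimate of Lemma \ref{lemma_2pts} over $\mc{C}$ and apply the divergence theorem. Since $u$ is supported in $\{|t|<T-\delta\}$, all boundary contributions at $t=\pm T$ vanish. Only the spatial boundary remains, which we approximate by the level sets $\{y_j=\varepsilon\}$ as $\varepsilon\searrow 0$. Near $\Gamma$ we have $y_j=d_\Gamma$, $|\nabla y_j|=1$, and the outward unit normal is $-\nabla y_j$. Thus
\[
\int_{\mc{C}}\nabla\cdot J_j=-\lim_{\varepsilon\searrow0}\int_{(-T,T)\times\{y_j=\varepsilon\}} J_j\cdot\nabla y_j ,
\]
and the whole task is to compute this limit for boundary admissible $u$.

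We write $u=y^{q/2}\psi$, where $\psi:=y^{-q/2}u$ has trace $\mc{D}_q u=0$, and we use $v=e^{-\lambda f_j}u$. Near the boundary, $e^{-\lambda f_j}$ and $e^{h_j}$ are smooth and bounded. Boundary admissibility (Definition \ref{def_bdry_ads}) gives the following asymptotics: $y^{q-1}u\to-\mc{N}_qu/(1-q)$, so $u\sim-(1-q)^{-1}\mc{N}_q u\,y^{1-q}$; also $D_y u\sim -\mc{N}_q u\,y^{-q}$ and $y^{q/2}\partial_t u\to0$.

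We then expand each term of $J^1_j\cdot\nabla y_j$ and $J^0_j\cdot\nabla y_j$ in powers of $y$. The relevant inputs are $\nabla f=\mu e^h y^q\nabla y+\dots$, $Q_0\sim\tfrac12\mu q e^h y^{q-1}+O(\mu^2y^{2q})$, $\nabla Q_0\sim\tfrac12\mu q(q-1)e^hy^{q-2}$, $A_0\sim\sigma y^{-2}+\lambda^2\mu^2e^{2h}y^{2q}$, and the $J^0$ coefficient $\tfrac{\mu}{2}(-q)(2-q)e^hy^{q-2}$. Each product is of the form $y^{\kappa}(\mc{N}_qu)^2$. The exponents work out to $\kappa=0$ for the borderline terms, namely:
\begin{itemize}
\item $\nabla f\,(D_yv)^2\sim \mu e^h y^{q}y^{-2q}$, times $y^{q}$ from the normalisation;
\item $Q_0\,v D_yv$;
\item $\nabla Q_0\, v^2$;
\item $\sigma y^{-2}\nabla f\, v^2$;
\item the $J^0$ term.
\end{itemize}
Terms of the form $y^{\kappa}(\mc{N}_q u)^2$ with $\kappa>0$ vanish in the limit. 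The time-derivative terms vanish by $\mc{D}_q(d_\Gamma^q\partial_tu)=0$. Some terms are strictly more singular, such as the $\mu^2y^{2q}$ parts of $Q_0$ and $J^0$ multiplying $vD_yv$ or $v^2$. For these we must check that the powers are positive: $2q+(1-q)+(-q)=1>0$, and $2q-1+2(1-q)=1>0$, so they also vanish. This bookkeeping fixes the exponent $1+q$ in $h$, which is exactly what makes the leading coefficients scale correctly.

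Summing the $\kappa=0$ contributions gives a boundary term $\lambda\,c_q\mu\, e^{h_j}e^{-2\lambda f_j}(\mc{N}_qu)^2$ with an explicit constant $c_q$ depending only on $q$. The relation \eqref{eq_carl_anew_0a} should simplify $c_q$. Whatever its sign, it is bounded by a constant times $\mu\lambda e^{h_j}e^{-2\lambda f_j}(\mc{N}_qu)^2$. Moving it to the left-hand side therefore yields the stated boundary term, after absorbing $\mu$ into the constant, since $\mu$ is fixed once $\mu\geq\mu_0$ is chosen. Since $y_1=y_2=d_\Gamma$ near $\Gamma$, both $j$ produce the same type of trace, and the bulk terms come directly from Lemma \ref{lemma_2pts}.

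The main obstacle is justifying these limits rigorously. First, the admissibility conditions give only the leading behaviour, so the cross terms require products of limits and must be controlled uniformly in $t$. For this we would use dominated convergence in $t$, together with the $L^2$-integrability of $\mc{N}_q u$ on $\partial\mc{C}$; if needed, we would approximate $u$ by smoother admissible functions. Second, there is a possible cancellation between the $Q_0$ and $\nabla Q_0$ contributions, which must be tracked carefully so that no divergent $y^{\kappa}$ with $\kappa<0$ survives. I would first verify the exponent bookkeeping term by term, and only then compute the constant $c_q$.
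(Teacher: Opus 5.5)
Your route is the paper's. You integrate the two-foliation pointwise estimate over $(-T,T)\times\{y_j>\epsilon\}$ and turn $\int\nabla\cdot J_j$ into $-\int\nabla y_j\cdot J_j$ on $\{y_j=\epsilon\}$. You then evaluate the limits term by term via Definition \ref{def_bdry_ads}, bound them by $\mu\int_{\p\mc{C}}e^{h_j}e^{-2\lambda f_j}(\mc{N}_qu)^2$, and absorb $\mu$ into the constant. The paper also absorbs $\mu$ silently, so strictly its constant depends on $\mu$ too. One step fails as you state it, however.

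\textbf{The gap.} You claim every contribution to $\nabla y_j\cdot J_j$ has the form $y^\kappa(\mc{N}_qu)^2$ and so can be bounded in absolute value ``whatever its sign''. This is false for the term $-\frac12\nabla_\alpha f_j\nabla^\beta v\nabla_\beta v$, which contracts to
\[
-\tfrac12\mu e^{h_j}y_j^q\bigl[(D_{y_j}v)^2+|\slashed\nabla v|^2-(\p_tv)^2\bigr].
\]
Definition \ref{def_bdry_ads} says nothing about tangential derivatives, so $\int_{\{y_j=\epsilon\}}y_j^q|\slashed\nabla v|^2$ is not controlled by $\mc{N}_qu$ at all. Extracting it from asymptotics, or via your proposed smooth approximation, would need tangential regularity of $\mc{N}_qu$, which is not assumed. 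The fix is the sign. This piece enters $\nabla y_j\cdot J_j$ negatively, hence enters $\int\nabla\cdot J_j$ positively, and can be discarded before letting $\epsilon\searrow0$. This is exactly the paper's term (iv): it drops $|\nabla_xv|^2$ and keeps only $(\p_tv)^2$, which tends to zero.

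\textbf{The bookkeeping.} Since $\mc{D}_q$ carries the factor $d_\Gamma^{-q/2}$, admissibility gives $y^{\frac q2-1}u\to-\mc{N}_qu/(1-q)$. Hence
\[
u\sim y^{1-\frac q2}, \qquad D_yu\sim-\tfrac{2-q}{2(1-q)}\,\mc{N}_qu\,y^{-\frac q2}.
\]
The powers $y^{1-q}$ and $y^{-q}$ you wrote belong to $\psi=y^{-q/2}u$, not to $u$.

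Your ``normalisation'' factor rescues the borderline terms, but you omit it in both checks of the $\mu^2$-terms. The correct exponents there are $1+q$, not $1$. They are still positive, but only because $q>-1$, i.e.\ $\sigma>-\frac34$; this is where the lower bound on $\sigma$ is used. The same fact makes the piece $\lambda\mu e^{h}y^qu$ of $e^{\lambda f}D_yv$ negligible against $D_yu$. Note also that $e^{h_j}$ is only continuous, not smooth, up to $\Gamma$, since $D_yh=-\mu y^q$ blows up.

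With the correct asymptotics, each of the paper's terms (iii), (v), (vi), (vii) is separately $O(1)$. So the $Q_0$/$\nabla Q_0$ cancellation you worried about is not needed.
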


\begin{proof}
The existence of the boundary defining pair $(y_1,y_2)$ comes from Lemma \ref{lemma_bdpair}. Furthermore, the constants $a, b_1, b_2, \beta_1, \beta_2, c $ are chosen as in Lemma \ref{lemma_2pts}. Let $\epsilon > 0$. Note that, the regions $\{ y_1 > \epsilon \}$ and $\{ y_2 > \epsilon \}$ are the same because $y_1 = y_2$ near the boundary.
Then, we integrate \eqref{eq_carl_anew_2pt_0} on the domain $\mc{C}_\epsilon := (-T,T) \times \{ y_j > \epsilon \}$, to get
\begin{align*}
\sum_{j=1}^2 \int_{\mc{C}_\epsilon} \frac{1}{2} e^{-2\lambda f_j} & |P_{\sigma,y_j} u|^2 \geqslant 2 \lambda \sum_{j=1}^2 \int_{\mc{C}_\epsilon} \nb \cdot J_j + C \lambda \sum_{j=1}^2 \int_{\mc{C}_\epsilon} e^{h_j} e^{-2\lambda f_j} [ \mu y_j^q |\nabla_x u|^2 + (\p_t u)^2] \\
& + C' \sum_{j=1}^2 \int_{\mc{C}_\epsilon} e^{-2\lambda f_j} ( \mu^4 \lambda^3 e^{3 h_j} y_j^{4q} + \mu^3 \lambda^3 e^{3 h_j} y_j^{3q-1} + \mu^2 \lambda e^{h_j} y_j^{2q-2} ) u^2. \nt \label{eq_carl_est_int_0a} 
\end{align*}

Using the notation $\p\mc{C}_\epsilon := (-T,T) \times \{ y_j = \epsilon \}$ and applying the divergence theorem to the $J_j$ integral term, we get that
\begin{equation} \label{eq_carl_div_bdry}
\int_{\mc{C}_\epsilon}\nb \cdot J_j = - \int_{\p\mc{C}_\epsilon} \nb y_j \cdot J_j.
\end{equation}

Now we will estimate the above boundary term. For this purpose, note that due to Definition \ref{def_bdry_ads}, for any smooth function $g$ we have the following
\begin{align*}
\lim_{\epsilon \searrow 0} \int_{\p\mc{C}_\epsilon} g(t,y_j) y_j^q (\p_t v)^2 & = 0,\\
\lim_{\epsilon \searrow 0} \int_{\p\mc{C}_\epsilon} g(t,y_j) y_j^q (D_{y_j} v)^2 & = \frac{(2-q)^2}{4(1-q)^2} \int_{\p\mc{C}} g(t,y_j) e^{-2 \lambda f_j} (\mc{N}_q u)^2, \nt \label{eq_carl_est_int_1}\\
\lim_{\epsilon \searrow 0} \int_{\p\mc{C}_\epsilon} g(t,y_j) y_j^{-2 + q} v^2 & = \frac{1}{(1-q)^2} \int_{\p\mc{C}} g(t,y_j) e^{-2 \lambda f_j} (\mc{N}_q u)^2.
\end{align*}

Then, we want to estimate the following terms near the boundary
\begin{align*}
\nb y_j \cdot J_j & = \nb^\beta f_j \nb_\beta v D_{y_j} v + \frac{1}{2} Q_{0,j} D_{y_j} (v^2) - \frac{1}{2} \nb^\beta v \nb_\beta v D_{y_j} f_j - \frac{1}{2} D_{y_j} Q_{0,j} v^2 + \frac{1}{2} D_{y_j} f_j A_{0,j} v^2 \\
& \quad + \left\{ \frac{5\mu^2}{16} (1-2q) {y_j}^{2q-1} + \frac{\mu}{2} (-q)(2-q) {y_j}^{q-2} \right\} e^{h_j} |\nb {y_j}|^4 v^2\\
& = \mu e^{h_j} y_j^q (D_{y_j} v)^2 + 2 e^{h_j} ct \p_t v D_{y_j} v + \frac{1}{2} Q_{0,j} D_{y_j} (v^2) - \frac{1}{2} \mu e^{h_j} y_j^q |\nb y_j|^2 [ |\nb v|^2 - (\p_t v)^2 ] \\
& \quad - \frac{1}{2} D_{y_j} Q_{0,j} v^2 + \frac{1}{2} \mu e^{h_j} y_j^q |\nb y_j|^2 A_{0,j} v^2 \\
& \quad + \left\{ \frac{5\mu^2}{16} (1-2q) {y_j}^{2q-1} + \frac{\mu}{2} (-q)(2-q) {y_j}^{q-2} \right\} e^{h_j} |\nb y_j|^4 v^2 \\
& =: (i) + (ii) + \ldots + (vii). \nt \label{eq_carl_est_int_1b}
\end{align*}

Using the second equation from \eqref{eq_carl_est_int_1} estimates $(i)$:
\begin{equation} \label{eq_carl_999}
(i) \ \lim_{\epsilon \searrow 0} \int_{\p\mc{C}_\epsilon} \mu e^{h_j} y_j^q (D_{y_j} v)^2 \lesssim \mu \int_{\p\mc{C}} e^{h_j} e^{-2 \lambda f_j} (\mc{N}_q u)^2.
\end{equation}
%\frac{(2-q)^2}{4(1-q)^2} 

For $(ii)$, we compute as follows
\begin{equation} \label{eq_carl_est_int_2a}
(ii) \ \left| 2ct \lim_{\epsilon \searrow 0} \int_{\p\mc{C}_\epsilon} \p_t v D_{y_j} v \right| \lesssim \lim_{\epsilon \searrow 0} \int_{\p\mc{C}_\epsilon} y_j^q (\p_t v)^2 + \lim_{\epsilon \searrow 0} \int_{\p\mc{C}_\epsilon} y_j^{-q} (D_{y_j} v)^2 = 0, 
\end{equation}
where the constant present in the middle estimate is dependent on $\mu$ and $\lambda$.

Next, using \eqref{eq_carl_anew_derh} and \eqref{eq_carl_anew_2pt_Q}, we see that $|Q_{0,j}| \lesssim \mu e^{h_j} y_j^{q-1}$. Hence, for $(iii)$, we have
\begin{equation}
(iii) \ \lim_{\epsilon \searrow 0} \int_{\p\mc{C}_\epsilon} \frac{1}{2} Q_0 D_{y_j} (v^2) \lesssim \mu \int_{\p\mc{C}} e^{h_j} e^{-2 \lambda f_j} (\mc{N}_q u)^2, \label{eq_carl_est_int_2b}
\end{equation}
% & = \frac{1}{2} \lim_{\epsilon \searrow 0} \int_{\p\mc{C}_\epsilon} \mu e^{h_j} q y_j^{q-1} v D_{y_j} v ---------- \frac{q(2-q)}{4(1-q)^2} \\ & = \frac{1}{2} \lim_{\epsilon \searrow 0} \int_{\p\mc{C}_\epsilon} \mu e^{h_j} q y_j^{\frac{q}{2} -1} v y_j^{\frac{q}{2}} D_{y_j} v \\
where we used \eqref{def_bdry_ads} and \eqref{eq_carl_est_int_1} in the last step. Similarly, for the remaining terms, we have
\begin{align*}
(iv) \ - \frac{1}{2} \lim_{\epsilon \searrow 0} \int_{\p\mc{C}_\epsilon} \mu e^{h_j} y_j^q [ |\nb v|^2 - (\p_t v)^2 ] \leqslant \frac{1}{2} \lim_{\epsilon \searrow 0} \int_{\p\mc{C}_\epsilon} \mu e^{h_j} y_j^q (\p_t v)^2 \leqslant 0,
\end{align*}
%& \leqslant - \frac{1}{2} \lim_{\epsilon \searrow 0} \int_{\p\mc{C}_\epsilon} \mu e^{h_j} y_j^q (D_{y_j} v)^2 \\ & = - \mu \frac{(2-q)^2}{8(1-q)^2} \int_{\p\mc{C}} e^{h_j} e^{-2 \lambda f_j} (\mc{N}_q u)^2,
and
\begin{align*}
(v) \quad -\frac{1}{2} \lim_{\epsilon \searrow 0} \int_{\p\mc{C}_\epsilon} D_{y_j} Q_0 v^2 & = - \frac{1}{4} \lim_{\epsilon \searrow 0} \int_{\p\mc{C}_\epsilon} \mu e^{h_j} q (q-1) y_j^{q-2} e^{-2 \lambda f_j} u^2 \lesssim \mu \int_{\p\mc{C}} e^{h_j} e^{-2 \lambda f_j} (\mc{N}_q u)^2,
\end{align*}
and
\begin{align*}
(vi) \quad & \frac{1}{2} \lim_{\epsilon \searrow 0} \int_{\p\mc{C}_\epsilon} \mu e^{h_j} y_j^q A_0 v^2 = \frac{\sigma}{2} \lim_{\epsilon \searrow 0} \int_{\p\mc{C}_\epsilon} \mu e^{h_j} y_j^{q-2} v^2 \lesssim \mu \int_{\p\mc{C}} e^{h_j} e^{-2 \lambda f_j} (\mc{N}_q u)^2,\\ %\frac{\sigma}{2(1-q)^2} 
(vii) \quad & \mu \frac{q(q-2)}{2} \lim_{\epsilon \searrow 0} \int_{\p\mc{C}_\epsilon} y_j^{q-2} v^2 \leqslant \mu \int_{\p\mc{C}} e^{h_j} e^{-2 \lambda f_j} (\mc{N}_q u)^2. \nt \label{eq_carl_est_int_2c} %\frac{q(q-2)}{2(1-q)^2} 
\end{align*}
Combining \eqref{eq_carl_est_int_1b}-\eqref{eq_carl_est_int_2c} and the above discussion, we get that
\[ \int_{\mc{C}} \nb y_j \cdot J_j \lesssim \mu \int_{\p\mc{C}} e^{h_j} e^{-2 \lambda f_j} (\mc{N}_q u)^2. \]
%\left( \frac{1}{1-q} + \frac{q^2}{8(1-q)^2} \right) 
Using the above estimate in \eqref{eq_carl_div_bdry} implies that the following holds for some $C''>0$
\[\int_{\mc{C}} \nb \cdot J_j \geqslant - C'' \int_{\p\mc{C}} e^{h_j} e^{-2 \lambda f_j} (\mc{N}_q u)^2.\]
Finally, combining the above estimate with \eqref{eq_carl_est_int_0a} completes the proof of the theorem.
\end{proof}

\section{Observability} \label{sec_obs}

In this final section, we prove our main observability result, Theorem \ref{thm_main}.

\begin{proof}[Proof of Theorem \ref{thm_main}]
First, we set $y_1 := y$, and we choose the convex boundary defining pair $( y_1, y_2 )$ for $\Omega$ as stated in Lemma \ref{lemma_bdpair}. We define $f_j$ and $h_j$, for $j=1,2$, and choose $\mu,b_1,b_2$ as in the statement of Theorem \ref{thm_carl_est_main}. Next, we let $b= \max \{b_1,b_2\}$ and let $\beta_j, j=1,2$, be defined as in Lemma \ref{lemma_2pts}. Now, let us choose $T$ such that
\begin{equation*}
T > \frac{\mu b^{1+q}}{1+q},
\end{equation*}
and let $\displaystyle c \leqslant \min \left\{\frac{1}{T}, 1 \right\}$. Then we see that
\begin{equation} \label{eq_obs_anew_pf_1}
\inf_{\mc{C} \cap \{t=\pm T\}} f_j = a - \sup_{\mc{C} \cap \{t=\pm T\}} \exp( h_j ) = a - \sup_{\mc{C}} \exp\left[ \frac{\mu (b_j^{1+q}-y_j^{1+q})}{1+q} - c T^2 + \beta_j \right] > a - e^{\beta_j},
\end{equation}
and 
\begin{equation} \label{eq_obs_anew_pf_2}
\sup_{\mc{C} \cap \{t=0\}} f_j = a - \inf_{\mc{C} \cap \{t=0\}} \exp( h_j ) \leqslant a - e^{\beta_j}.
\end{equation}
Then, \eqref{eq_obs_anew_pf_1} and \eqref{eq_obs_anew_pf_2} imply that the following holds for $j=1,2$
\[ \sup_{\mc{C} \cap \{t=\pm T\}} (-f_j) < - (a-e^{\beta_j}) \quad \text{ and } \quad \inf_{\mc{C} \cap \{t=0\}} (-f_j) \geqslant - (a-e^{\beta_j}). \]
Thus, there exist constants $0 < \delta \ll T$, $\alpha_j < \beta_j$, such that, for $j=1,2$
\begin{equation} \label{eq_obs_pf_1}
\begin{rcases}
h_j \leqslant \alpha_j, \quad - f_j \leqslant -(a- e^{\alpha_j}), \quad \text{for } t \in (-T, -T + \delta) \cup (T-\delta,T),\\
h_j \geqslant \alpha_j, \quad - f_j \geqslant - (a- e^{\alpha_j}), \quad \text{for } t \in (-\delta, \delta).
\end{rcases}
\end{equation}
Let us define the regions $I_\delta, J_\delta$ as follows
\begin{equation*} %\label{eq_obs_pf_2}
\begin{gathered}
I_\delta := [-T+\delta, T-\delta],\\
J_\delta := (-T,-T+\delta) \cup (T-\delta,T).
\end{gathered}
\end{equation*}
Then, let $\psi \in C^\infty (\bar{\mc{C}})$ be a cut-off function such that
\begin{enumerate}

\item $\psi$ depends only on $t$.
\item $\psi = 1$, for $t \in I_\delta$.
\item $\psi = 0$, near $t=\pm T$.

\end{enumerate}

Now applying Theorem \ref{thm_carl_est_main} to $\psi u$ shows that
\begin{align*}
& \sum_{j=1}^2 \int_{\mc{C}} e^{-2\lambda f_j} |P_{\sigma,y_j} (\psi u)|^2 + \lambda \sum_{j=1}^2 \int_{\p\mc{C}} e^{h_j} e^{-2 \lambda f_j} \psi^2 (\mc{N}_q u)^2 \\
& \gtrsim \lambda \sum_{j=1}^2 \int_\mc{C} e^{h_j} e^{-2\lambda f_j} [ \mu y_j^q \psi^2 |\nb_x u|^2 + |\p_t (\psi u)|^2 ] \\
& \qquad + \sum_{j=1}^2 \int_\mc{C} e^{-2\lambda f_j} ( \mu^4 \lambda^3 e^{3 h_j} y_j^{4q} + \mu^3 \lambda^3 e^{3 h_j} y_j^{3q-1} + \mu^2 \lambda e^{h_j} y_j^{2q-2} ) \psi^2 u^2\\
& \gtrsim \sum_{j=1}^2 \int_{I_\delta \times \Omega} \lambda e^{h_j} e^{-2\lambda f_j} \big[ \mu y_j^q | \nb_x u|^2 + (\p_t u)^2 + \lambda^2 e^{2 h_j} ( y_j^{4q} + y_j^{3q-1}) u^2 + y_j^{2q-2}  u^2 \big], 
\end{align*}
since $\mu \gg 1$. Since $y_j$'s are bounded above, the above estimate implies that
\begin{align*}
\sum_{j=1}^2 \int_{\mc{C}} e^{-2\lambda f_j} & |P_{\sigma,y_j} (\psi u)|^2 + \lambda \sum_{j=1}^2 \int_{\p\mc{C}} e^{h_j} e^{-2 \lambda f_j} \psi^2 (\mc{N}_q u)^2 \\
& \gtrsim \sum_{j=1}^2 \int_{I_\delta \times \Omega} \lambda e^{h_j} e^{-2\lambda f_j} \big[ | \nb_x u|^2 + (\p_t u)^2 + y_j^{-2} u^2 \big]. \nt \label{eq_obs_pf_3}
\end{align*}
For the bulk term, using \eqref{P_sigma} and the definition of $\psi$, we get
\begin{align*}
|P_{\sigma,y_j} (\psi u)| & \lesssim |\psi (- \p_t^2 u + \Delta_\sigma u)| + |\p_t \psi| |\p_t u| + |\p_t^2 \psi| |u| + |\psi| |u| \\
& \lesssim |D_X u + Vu| + |\p_t u| + |u|.
\end{align*}
This implies that 
\begin{align*}
& \sum_{j=1}^2 \int_{\mc{C}} e^{-2\lambda f_j} |P_{\sigma,y_j} (\psi u)|^2 \\
& \lesssim \sum_{j=1}^2 \int_{I_\delta \times \Omega} e^{-2\lambda f_j} |P_{\sigma,y_j} u|^2 + \sum_{j=1}^2 \int_{J_\delta \times \Omega} e^{-2\lambda f_j} (|D_X u + Vu| + |\p_t u| + |u|)^2 \\
& \leqslant C \sum_{j=1}^2 \int_{I_\delta \times \Omega} e^{-2\lambda f_j} [ | \nb_x u |^2 + (\p_t u)^2 + d_\Gamma^{-2} u^2] \\
& \quad + C \sum_{j=1}^2 e^{-2 \lambda (a- e^{\alpha_j}) } \int_{J_\delta \times \Omega} ( | \nb_x u |^2 + (\p_t u)^2 + d_\Gamma^{-2} u^2 ) \\
& \leqslant C \sum_{j=1}^2 \int_{I_\delta \times \Omega} e^{-2\lambda f_j} [ | \nb_x u |^2 + (\p_t u)^2 + y_j^{-2} u^2] + C \sum_{j=1}^2 e^{-2 \lambda (a- e^{\alpha_j}) } \int_{J_\delta \times \Omega} ( | \nb_x u |^2 + (\p_t u)^2 ),
\end{align*}
where we also used the bounds for $X$, $V$ in Assumption \ref{ass.setting}, \eqref{eq_obs_pf_1}, and Lemma \ref{lemma_hardy_para}, and where $C := C(\| X \|_{L^\infty}, \|V\|_{L^\infty}, \Omega) >0$. Furthermore, we also used the fact that near the boundary $d_\Gamma = y_j, \ j=1,2$. Then using \eqref{eq_energy_defn} for the $J_\delta$ term in the above estimate, we get
\begin{align*}
\sum_{j=1}^2 \int_{\mc{C}} e^{-2\lambda f_j} |P_{\sigma,y_j} (\psi u)|^2 & \lesssim \sum_{j=1}^2 \int_{I_\delta \times \Omega} e^{-2\lambda f_j} (|\nb_x u |^2 + (\p_t u)^2 + y_j^{-2} u^2 ) \\
& \qquad + \sum_{j=1}^2 e^{-2 \lambda (a- e^{\alpha_j})} \int_{J_\delta} E[u] (s) \rd s. \nt \label{eq_obs_pf_4} 
\end{align*}
Then combining \eqref{eq_obs_pf_3} and \eqref{eq_obs_pf_4} and taking large enough $\lambda$, say $\lambda \gg e^T (1+ \| X \|_{L^\infty} + \|V\|_{L^\infty} )$, so as to absorb the integral over $I_\delta \times \Omega$ in the above estimate into the right hand side of \eqref{eq_obs_pf_3}, we get 
\begin{align*}
\lambda \sum_{j=1}^2 \int_{\p\mc{C}} e^{h_j} & e^{-2 \lambda f_j} (\mc{N}_q u)^2 + \left( \sum_{j=1}^2 e^{-2 \lambda (a- e^{\alpha_j})} \right) \int_{J_\delta} E [u] (s) \rd s \\
& \gtrsim \sum_{j=1}^2 \int_{I_\delta \times \Omega} \lambda e^{h_j} e^{-2\lambda f_j} \big[| \nb_x u|^2 + (\p_t u)^2 + y_j^{-2} u^2 \big] \\
& \gtrsim \sum_{j=1}^2 \int_{I_\delta \times \Omega} \lambda e^{h_j} e^{-2\lambda f_j} \big[| \nb_x u|^2 + (\p_t u)^2 + u^2 \big].
\end{align*}
Since $(-\delta,\delta) \subseteq I_\delta$, reducing the region of integration on the right hand side of the above estimate, we get
\begin{align*} 
\lambda \sum_{j=1}^2 \int_{\p\mc{C}} & e^{h_j} e^{-2 \lambda f_j} (\mc{N}_q u)^2 + \left( \sum_{j=1}^2 e^{-2 \lambda (a- e^{\alpha_j})} \right) \int_{J_\delta} E[u] (s) \rd s \\
& \gtrsim \sum_{j=1}^2 \int_{(-\delta,\delta) \times \Omega} \lambda e^{h_j} e^{-2\lambda f_j} \big[ | \nb_x u|^2 + (\p_t u)^2 + u^2 \big]\\
& \gtrsim \lambda \left( \sum_{j=1}^2 e^{\alpha_j} \right) \left( \sum_{j=1}^2 e^{-2 \lambda (a- e^{\alpha_j})} \right) \int_{- \delta}^{ \delta} E [u] (s) \rd s, \nt \label{eq_obs_pf_z}
\end{align*}
where we used \eqref{eq_obs_pf_1} in the last step.
Also note that \eqref{eq_energy_est} implies that
\[ e^{-MT} E [u](0) \leqslant E [u](t) \leqslant e^{MT} E [u](0), \quad t \in [-T,T]. \]
Using this in \eqref{eq_obs_pf_z}, shows that
\begin{align*}
\lambda \sum_{j=1}^2 \int_{\p\mc{C}} e^{h_j} e^{-2 \lambda f_j} (\mc{N}_q u)^2 + & \delta \left( \sum_{j=1}^2 e^{-2 \lambda (a- e^{\alpha_j})} \right) e^{MT} E [u](0) \\
& \gtrsim \lambda \delta \left( \sum_{j=1}^2 e^{\alpha_j} \right) \left( \sum_{j=1}^2 e^{-2 \lambda (a- e^{\alpha_j})} \right) e^{-MT} E [u](0).
\end{align*} 
Finally, taking $\lambda$ large enough so that $e^{2MT} \displaystyle \left( \sum_{j=1}^2 e^{\alpha_j} \right)^{-1} \ll \lambda$, shows that we can absorb the second term in the left hand side of the above estimate into the right hand side, which completes the proof of the theorem.
\end{proof}

\begin{remark} \label{T_bound}
Recall that the proof of Theorem \ref{thm_main} gives a lower bound on the observation time:
\[
T > \frac{\mu b^{1+q}}{1+q} \text{.}
\]
Here, $\mu$ and $b := \sup_\Omega y$ are the parameters stated in Theorem \ref{thm_carl_est_main}, and they are essentially determined by the size and geometry of the domain $\Omega$.
In particular, $\mu$ must be large enough to absorb various error terms within the Carleman estimate.
\end{remark}

\bibliographystyle{amsplain}   % or alpha, plain, amsalpha, etc.
\bibliography{paper_bib}

\end{document}